\newtheorem{example}[theorem]{Example}
\begin{document}

\author{{Sina Ober-Bl\"obaum\thanks{Corresponding author. Email: sinaob@math.uni-paderborn.de} and Nils Saake\thanks{Email: snils@mail.uni-paderborn.de}}\\[2pt]
{\small Computational Dynamics and Optimal Control, Department of Mathematics}, \\ {\small University of Paderborn, Warburger Str.~100, 33098 Paderborn, Germany}}
\title{Construction and analysis of higher order Galerkin variational integrators}

\maketitle

\begin{abstract}
{In this work we derive and analyze variational integrators of higher order for the structure-preserving simulation of mechanical systems. The construction is based on a space of polynomials together with Gauss and Lobatto quadrature rules to approximate the relevant integrals in the variational principle. The use of higher order schemes increases the accuracy of the discrete solution and thereby decrease the computational cost while the preservation properties of the scheme are still guaranteed. 
The order of convergence of the resulting variational integrators are investigated numerically and it is discussed which combination of space of polynomials and quadrature rules provide optimal convergence rates. For particular integrators the order can be increased compared to the Galerkin variational integrators previously introduced in \cite{MaWe01}.
Furthermore, linear stability properties, time reversibility, structure-preserving properties as well as efficiency for the constructed variational integrators are investigated and demonstrated by numerical examples.}
{discrete variational mechanics; numerical convergence analysis; symplectic methods; variational integrators}
\end{abstract}

\section{Introduction}\label{sec:intro}

During the last years the development of geometric numerical integrators has been of high interest in numerical integration theory. 
Geometric integrators are structure-peserving integrators with the goal to capture the dynamical system's behavior in a most realistic way (\cite{MaWe01,HaLuWa,Reic94}). Using structure-preserving methods for the simulation of mechanical systems, specific properties of the underlying system are handed down to the numerical solution, for example, the energy of a conservative system shows no numerical drift or the first integrals induced by symmetries are preserved exactly.
One particular class of structure-preserving integrators is the class of variational integrators, introduced in \cite{MaWe01,Sur90}, and which has been further developed and extended to different systems and applications during the last years.
Variational integrators (\cite{MaWe01}) are based on a discrete variational formulation of the underlying system, e.g.~based on a discrete version of Hamilton's principle for conservative mechanical systems.
The resulting integrators are symplectic and momentum-preserving and have an excellent long-time energy behavior.

By choosing different variational formulations (e.g.~Hamilton, Lagrange-d'Alembert, Hamilton-Pontryagin, etc.), variational integrators have been developed for a large class of problems: These involve classical conservative mechanical systems (for an overview see \cite{LMOW04,LMOWe04}), forced and controlled systems (\cite{ObJuMa10,Kane00}) , constrained systems (holonomic (\cite{leyendecker07-2, DMOCC}) and nonholonomic systems (\cite{KoMa2010, CoMa01})), nonsmooth systems (\cite{FMOW03}), stochastic systems (\cite{BRO08}), multiscale systems (\cite{TaOwMa2010, LO12, StGr09}) and Lagrangian PDE systems (\cite{Lew03, MPS98}). The applicability of variational integrators is not restricted to mechanical systems. In \cite{OTCOM13} variational integrators have been developed for the structure-preserving simulation of electric circuits.

Of special interest is the construction of higher order symplectic integrators:
To ensure moderate computational costs for long-time simulations, typically first or second order integrators are used. However, many applications, in particular problems in space mission design, demand more accurate discretization schemes.
There are mainly three different ways of constructing symplectic integrators of higher order (cf.~\cite{Leimkuhler2004}): (i) By applying  composition methods higher order symplectic schemes can be constructed in a systematic way based on a splitting of the Hamiltonian into explicitly solvable subproblems (for an overview see \cite{McQu02,Yoshida1990262}).
(ii) For (partitioned) Runge-Kutta methods there is a well-developed order theory which can be used to identify higher order symplectic Runge-Kutta methods (order conditions on the coefficients have first been introduced by \cite{Sa88,La88,Sur89,Sun93}). However, the identification of the coefficients for a symplectic Runge-Kutta scheme of desired order is not trivial and quite involved.
(iii) In contrast, generating functions (see e.g.~\cite{arnold89}) can be constructed that automatically guarantees the symplecticity of the associated numerical method. 
Since variational integrators rely on the approximation of the action, a generating function of first kind (cf.~\cite{MaWe01}), 
we focus on the latter approach for the construction of higher order methods.

Of particular interest are \emph{Galerkin variational integrators} which have been already studied in e.g.~\cite{MaWe01,Leok2011,HaLe13}. They rely on the approximation of the action based on a choice of a finite-dimensional function space and a numerical quadrature formula.
In \cite{Leok2011} Galerkin and shooting-based constructions for discrete Lagrangian are presented. Rather than choosing a infinite-dimensional function space, the shooting-based construction depends on a choice of a numerical quadrature formula together with a one-step method. 
In \cite{HaLe13} a convergence analysis for Galerkin type variational integrators is established showing that under suitable assumptions the integrators inherit the order of convergence given by the finite-dimensional approximation space used in the construction.
More detailed, it is shown that a Galerkin variational integrator based on an $s+1$-dimensional function space (e.g.~the space of polynomials of degree $s$) and a quadrature rule of order $s$ has convergence oder $s$.

In this contribution we numerically demonstrate that the convergence order of the variational integrator can even be increased if higher order quadrature rules are used.
We focus on a particular class of Galerkin variational integrators: As finite-dimensional function space we choose the space of polynomials of degree $s$. Furthermore, as quadrature rules we focus on the Gauss and Lobatto quadrature formula. However, in contrast to \cite{MaWe01} we do not restrict the number $r$ of quadrature points being equal to the polynomial order $s$. 
For two numerical examples we investigate which combination of space of polynomials, quadrature rules and number of quadrature points provide optimal convergence rates.\footnote{The generalization $r\not=s$ is also described in \cite{HaLe13}, however the influence of the relation of the number of quadrature points and the polynomial degree is not investigated. }
In particular, the numerical results indicate that the order of the higher order variational integrator constructed by a polynomial of degree $s$ and a quadrature rule of order $u$ is $\min{(2s,u)}$. Thus, the integrator order can be increased to $2s$ for sufficient accurate quadrature rules. While the focus on this work lies on numerical investigations, a formal proof of this superconvergence result is still subject of ongoing research.
Based on the numerical results, we perform a numerical analysis regarding efficiency versus accuracy (see also \cite{Sa12}).
Furthermore, we investigate analytically and numerically preservation properties, time reversibility and linear stability of the constructed integrators. Whereas preservation properties and time reversibly has also been subject of previous works for particular Galerkin variational integrates (see e.g.~\cite{Leok2011}), the stability analysis provides another new contribution. 

\paragraph{Outline}

In Section~\ref{sec:varmech} we recall the basic definitions and concepts of variational mechanics and variational integrators.
In Section~\ref{sec:highvarint} the higher order integrators are constructed following the Galerkin approach introduced in \cite{MaWe01}. Properties of the Galerkin variational integrators are presented in Section~\ref{sec:prop}. In Section~\ref{subsec:prop} preservation properties, such as symplecticity and preservation of momentum maps are discussed. In Section~\ref{subsec:timerevers} it is shown under which conditions the constructed integrators are time-reversible. Furthermore, in Section~\ref{subsec:linstab} a linear stability analysis is performed for specific examples showing in which region the constructed higher order variational integrators are asymptotically stable (in the sense that the growth of the solution is asymptotically bounded (cf.~\cite{Leimkuhler2004}). A-stability for a particular class of variational integrators is shown.
In Section~\ref{sec:numex} the numerical convergence analysis by means of two numerical examples, the harmonic oscillator and the Kepler problem, is performed using different combinations of the polynomial degree $s$ and the number $r$ of quadrature points. Furthermore, the relation of computational efficiency and accuracy for two different classes of variational integrators is investigated. Finally, we conclude with a summary of the results and an outlook for future work in Section~\ref{sec:concl}.

\section{Variational mechanics}\label{sec:varmech}

\subsection{Hamilton's principle and Euler-Lagrange equations}\label{subsec:Hamprinc}

Consider a mechanical system defined on the $n$-dimensional configuration manifold $Q$ with corresponding tangent bundle $TQ$ and cotangent bundle $T^*Q$. Let $q(t)\in Q$ and $\dot{q}(t)\in T_{q(t)}Q$, $t\in [0,T]$ be the time-dependent configuration and velocity of the system.

The \emph{action} $\mathfrak{S}:C^2([0,T],Q) \rightarrow \mathbb{R}$ of a mechanical system is defined as the time integral of the Lagrangian, i.e.,
\begin{equation}\label{eq:action}
\mathfrak{S}(q) = \int_0^T {L}({q}(t),\dot{{q}}(t))\, dt
\end{equation}
where the $C^2$-Lagrangian ${L}:TQ \rightarrow \mathbb{R}$ consists of kinetic minus potential energy. \emph{Hamilton's principle} seeks curves $q \in C^2([0,T],Q)$ with fixed initial value $q(0)$ and fixed final value $q(T)$ satisfying
\begin{equation}\label{eq:varprinc}
\delta \mathfrak{S}(q)=0
\end{equation}
for all variations $\delta q \in T_qC^2([0,T],Q)$.
This leads to the \emph{Euler-Lagrange equations}
\begin{equation}\label{eq:el}
\frac{d}{dt} \frac{\partial L}{\partial \dot{{q}}} - \frac{\partial L}{\partial {{q}}} = 0
\end{equation}
which are second-order differential equations describing the dynamics for conservative systems.

\subsection{Discrete Hamilton's principle and discrete Euler-Lagrange equations}\label{subsec:discreteHam}

The concept of variational integrators is based on a discretization of the variational principle~\eqref{eq:varprinc}.
Consider a time grid $\Delta t = \{t_k = kh\, | \, k=0,\ldots,N\}$, $Nh=T$, where $N$ is a positive integer and $h$ the step size. We replace the configuration ${q}(t)$ by a discrete curve ${q}_d= \{ {q}_k \}_{k=0}^N$ with ${q}_k={q}_d(t_k)$ as approximations to ${q}(t_k)$. We define a discrete Lagrangian $L_d: Q\times Q\rightarrow \mathbb{R}$
\begin{equation}\label{eq:approxaction} 
{L}_d({q}_k,{q}_{k+1}) \approx \int_{t_k}^{t_{k+1}}{L}({q}(t),{\dot{q}}(t))\, dt
\end{equation}
that approximates the action on $[t_k,t_{k+1}]$ based on two neighboring discrete configurations ${q}_k$ and ${q}_{k+1}$. 
The \emph{discrete action} $\mathfrak{S}_d:Q^{N+1}\rightarrow\mathbb{R}$ is defined as 
\[
\mathfrak{S}_d(q_d) = \sum_{k=0}^{N-1} L_d(q_k,q_{k+1}).
\]
The \emph{discrete Hamilton principle} is formulated by finding stationary points of the discrete action given by
\begin{equation}\label{eq:dlda}
\delta  \mathfrak{S}_d(q_d)=0
\end{equation}
with $\delta {q}_0 = \delta {q}_N = 0$. This gives the \emph{discrete Euler-Lagrange equations (DEL)}
\begin{equation}\label{eq:discEL}
D_1{L}_d({q}_k,{q}_{k+1}) +  D_2 {L}_d({q}_{k-1},{q}_{k}) =0
\end{equation}
for $k=1,\ldots,N-1$ and with $D_i$ being the derivative w.r.t.~the $i$-th argument.
Equation \eqref{eq:discEL} provides a discrete iteration scheme for \eqref{eq:el} that determines ${q}_{k+1}$ for given ${q}_{k-1}$ and ${q}_k$. It is also known as the \emph{discrete Lagrangian map} $F_{L_d}:Q\times Q \rightarrow Q\times Q$, given by $F_{L_d}(q_{k-1},q_k)=(q_k,q_{k+1})$ and $(q_{k-1},q_k), (q_k,q_{k+1})$ satisfy \eqref{eq:discEL}.
The discrete iteration schemes derived by a discrete variational principle are called \emph{variational integrators} and are well-known to be symplectic and momentum-preserving and exhibit excellent long-time energy behavior (cf.~Section~\ref{subsec:prop}). 

 The \emph{discrete Legendre transforms} $\mathbb{F}^\pm L_d: Q\times Q\rightarrow T^*Q$ provide discrete expressions for the conjugate momenta by 
\begin{align*}
&\mathbb{F}^- L_d:  (q_k,q_{k+1}) \rightarrow (q_k, p^-_k) = (q_k,-D_1 L({q}_k,{q}_{k+1}))\quad\text{and}\quad\\
&\mathbb{F}^+ L_d:  (q_{k-1},q_{k}) \rightarrow (q_{k},p^+_{k}) = (q_k, D_2 L({q}_{k-1},{q}_{k})). 
\end{align*}
Note that \eqref{eq:discEL} can be equivalently written as $p^-_k =p^+_k$ and the \emph{discrete Hamiltonian map} $\tilde{F}_{L_d}:T^*Q \rightarrow T^*Q$ defined by
\[
\tilde{F}_{L_{d}}: (q_k,p_k) \rightarrow (q_{k+1},p_{k+1})=\mathbb{F}^{\pm}L_{d}\circ F_{L{}_{d}}\circ(\mathbb{F}^{\pm}L_{d})^{-1}(q_k,p_k).
\]
is equivalent to the discrete Lagrangian map.


\section{Higher order Variational Integrators}\label{sec:highvarint}

\subsection{Approximation of the action integral}\label{subsec:approxint}

The approximation of the action integral consists of two approximation steps: the approximation of the space of trajectories and the approximation of the integral of the Lagrangian by appropriate quadrature rules.

We approximate the space of trajectories $\mathcal{C}([0,h],Q) = \{ q:[0,h] \rightarrow Q\; | \; q(0) = q_a, q(h) = q_b \}$ by a finite-dimensional approximation $\mathcal{C}^s([0,h],Q) \subset \mathcal{C}([0,h],Q)$ of the trajectory space given by
\[
\mathcal{C}^s([0,h],Q) = \{ q \in \mathcal{C}([0,h],Q) \; | \; q \in \Pi^s \},
\]
with $\Pi^s$ being the space of polynomials of degree $s$. 
Given $s+1$ control points $0 = d_0<d_1<\cdots<d_{s-1}<d_s=1$ and $s+1$ configurations ${q}_0 = (q_0^0,q_0^1,q_0^2,\dots,q_0^{s-1},q_0^s)\in Q^{s+1}$ with $q_0^0=q_a$ and $q_0^s=q_b$,  the degree $s$ polynomial $q_d(t;{q}_0,h)$ which passes through each $q_0^\nu$ at time $d_\nu h$, that is, $q_d(d_\nu h)=q_0^\nu$ for $\nu =0,\dots,s$, is uniquely defined. 

With the Lagrange polynomial $l_{\nu,s}: [0,1] \rightarrow \mathbb{R}$
\[
l_{\nu,s}(\tau) = \prod_{0\le i\le s, i\ne \nu} \frac{\tau-d_i}{d_\nu-d_i}
\]
we obtain $q_d(t;{q}_0,h)$ with $t\in [0,h]$ as
\[
q_d(t;{q}_0,h) = \sum_{\nu=0}^s q_0^\nu l_{\nu,s}\left(\frac{t}{h}\right).
\]
With 
\[
l_{\nu,s}(d_{i})=\begin{cases}
1 & ,i=\nu\\
0 & ,i\neq \nu
\end{cases}
\]
for all $i=0,\ldots,s$, we have $q_{d}(d_{i}h;{q}_{0},h)=q_{0}^{i}$. The derivative of $q_d(t;{q}_0,h)$ w.r.t.~$t$ provides an approximation of $\dot{q}$ on $[0,h]$ as
\[
\dot{q}_{d}(t;{q}_{0},h)=\frac{1}{h}\sum_{\nu=0}^{s}q_{0}^{\nu}\dot{l}_{\nu,s}\left(\frac{t}{h}\right).
\]
To approximate the trajectory $q:[0,T]\rightarrow Q$, we divide the time interval $[0,T]$ in $N=T/h$ sub intervals of length $h$ as
\[
[0,T]=\bigcup_{k=0}^{N-1}[kh,(k+1)h].
\]
On all sub intervals we approximate $q:[0,T]\rightarrow Q$ piecewise by the polynomials $q_{d,k}:[kh,(k+1)h]\rightarrow Q$ defined by
\[
q_{d,k}(t;{q}_{k},h):=q_{d}(t-kh;{q}_{k},h)\quad t\in[kh,(k+1)h]
\]
with ${q}_{k}=(q_{k}^{0},q_{k}^{1},...,q_{k}^{s-1},q_{k}^{s})$ and $k=0,\ldots,N-1$. To obtain a continuous approximation on $[0,T]$, we set $q_{k}^{s}=q_{k+1}^{0}$ for all $k=0,\ldots,N-2$.

For the approximation of the action integral \eqref{eq:action} we replace the curves $q(t)$ and $\dot{q}(t)$ by the piecewise polynomials $q_{d,k}(t;{q}_{k},h)$ and $\dot{q}_{d,k}(t;{q}_{k},h)$, $k=0,\ldots,N$, and approximate
\begin{equation}\label{eq:action_qd}
\int_{kh}^{(k+1)h} L(q_{d,k}(t;{q}_{k},h)  , \dot{q}_{d,k}(t;{q}_{k},h)) \, dt
\end{equation}
on each time interval $[kh,(k+1)h],\, k=0,\ldots,{N-1}$, by choosing a numerical quadrature rule $(b_i,c_i)_{i=1}^r$ w.r.t.~the time interval $[0,1]$ with quadrature points $c_i \in [0,1]$ and weights $b_i, \; i=1,\dots, r$.
The choice of quadrature rule should be adapted to the desired order of accuracy of the integrator since the order of the quadrature rule provides an upper bound for the order of the variational integrator (cf.~e.g.~\cite{Leok2011}).
By applying the quadrature rule $(b_i,c_i)_{i=1}^r$ to the integral \eqref{eq:action_qd} we define the discrete Lagrangian $L_{d,k}$ as 
\begin{eqnarray*}
L_{d,k}=L_{d}({q}_{k}=(q_{k}^{0},...,q_{k}^{s}),h) & = & h\sum_{i=1}^{r}b_{i}L(q_{d,k}(c_{i}h+kh;{q}_{k},h),\dot{q}_{d,k}(c_{i}h+kh;{q}_{k},h))\\
 & = & h\sum_{i=1}^{r}b_{i}L(q_{d}(c_{i}h;{q}_{k},h),\dot{q}_{d}(c_{i}h;{q}_{k},h))
\end{eqnarray*}
which provides an approximation of the action on the interval $[kh, (k+1)h]$ as $L_{d,k}  \approx  \int_{kh}^{(k+1)h}L(q(t),\dot{q}(t))\,dt$.
Note that the discrete Lagrangian depends on $s+1$ configurations ${q}_k$ and the step size $h$. In the following, we write $L_d({q}_k)$ for $L_d({q}_k,h)$.
Finally, we define the discrete action sum over the entire trajectory to be
\begin{equation}\label{eq:discreteaction}
\mathfrak{S}_{d}({q}_{0},\ldots,{q}_{N-1})=\sum_{k=0}^{N-1}L_{d}({q}_{k})
\end{equation}
which is an approximation of the action sum on $[0,T]$ as $\mathfrak{S}_{d}({q}_{0},\ldots,{q}_{N-1}) \approx \mathfrak{S}(q)$.

\subsection{Discrete Hamilton's principle}\label{sub:discrteHam}
For the discrete action $\mathfrak{S}_{d}$ defined in \eqref{eq:discreteaction} we can apply discrete Hamilton's principle as described in Section~\ref{subsec:discreteHam}. Since we want to determine discrete approximations of curves for which the discrete action is stationary, the derivatives of the action w.r.t.~$q_k^\nu$ have to vanish for all $k=0,\ldots,N-1$ and $\nu=0,\ldots,s$.
This leads for $k=0,\ldots,N-1$ and $\nu = 1,\ldots,s-1$ to
\begin{eqnarray*}
0 & = & \frac{\partial \mathfrak{S}_{d}}{\partial q_{k}^{\nu}}({q}_{0},...,{q}_{N-1}) =  \frac{\partial L_{d}}{\partial q_{k}^{\nu}}({q}_{k})\\
 & = & h\sum_{i=1}^{r}b_{i}\left(\frac{\partial L}{\partial q}(c_{i}h;{q}_{k})\frac{\partial q_{d}}{\partial q_{k}^{\nu}}+\frac{\partial L}{\partial\dot{q}}(c_{i}h;{q}_{k})\frac{\partial \dot{q}_{d}}{\partial q_{k}^{\nu}}\right)\\
 & = & h\sum_{i=1}^{r}b_{i}\left(\frac{\partial L}{\partial q}(c_{i}h;{q}_{k})l_{\nu,s}(c_{i})+\frac{\partial L}{\partial\dot{q}}(c_{i}h;q_{k})\frac{1}{h}\dot{l}_{\nu,s}(c_{i})\right).
\end{eqnarray*}
Note that we use the short notation $\frac{\partial L}{\partial q}(c_{i}h;{q}_{k})=\frac{\partial L}{\partial q}(q_{d}(c_{i}h;{q}_{k},h),\dot{q}_{d}(c_{i}h;{q}_{k},h))$ and that we have $\frac{\partial q_{d}}{\partial q_{k}^{\nu}}=\frac{\partial q_{d}(c_{i}h;{q}_{k},h)}{\partial q_{k}^{\nu}}=l_{\nu,s}(c_{i})$. The analog holds for the other two terms.
With $ q_{k-1}^{s}=q_{k}^{0}$ for all $k=1,\ldots,N-1$ we obtain for $\nu=0$ and $\nu = s$ 

\begin{eqnarray*}
0 & = &\frac{\partial \mathfrak{S}_{d}}{\partial q_{k-1}^{s}}({q}_{0},...,{q}_{N-1}) = \frac{\partial \mathfrak{S}_{d}}{\partial q_{k}^{0}}({q}_{0},...,{q}_{N-1}) = \frac{\partial L_{d}({q}_{k-1})}{\partial q_{k-1}^{s}}+\frac{\partial L_{d}({q}_{k})}{\partial q_{k}^{0}}\\
 & = & h\sum_{i=1}^{r}b_{i}\left(\frac{\partial L}{\partial q}(c_{i}h;{q}_{k-1})l_{s,s}(c_{i})+\frac{\partial L}{\partial\dot{q}}(c_{i}h;{q}_{k-1})\frac{1}{h}\dot{l}_{s,s}(c_{i})\right)\\
 &  & +h\sum_{i=1}^{r}b_{i}\left(\frac{\partial L}{\partial q}(c_{i}h;{q}_{k})l_{0,s}(c_{i})+\frac{\partial L}{\partial\dot{q}}(c_{i}h;{q}_{k})\frac{1}{h}\dot{l}_{0,s}(c_{i})\right).
\end{eqnarray*}

With the notation $D_{i}L_{d}(q_{k}^{0},\ldots,q_{k}^{s}):=\frac{\partial L_{d}(q_{k})}{\partial q_{k}^{i-1}}$ we obtain the \emph{discrete Euler-Lagrange equations}

\begin{equation}\label{eq:del_1}
D_{s+1}L_{d}(q_{k-1}^{0},\ldots,q_{k-1}^{s})+D_{1}L_{d}(q_{k}^{0},\ldots,q_{k}^{s})=0,
\end{equation}
\begin{equation}\label{eq:del_2}
D_{i}L_{d}(q_{k}^{0},\ldots,q_{k}^{s})=0\quad \forall i=2,\ldots,s,
\end{equation}
for $k=1,\ldots,N-1$. A sequence $\{{q}_{k}\}_{k=0}^{N-1}=\{(q_{k}^{0},...,q_{k}^{s})\}_{k=0}^{N-1}$ that satisfies  \eqref{eq:del_1}-\eqref{eq:del_2} and the transition condition is a \emph{solution of the discrete Euler-Lagrange equations}. We denote the left hand side of \eqref{eq:del_1} and \eqref{eq:del_2} by $D_\text{DEL}L_{d}(q_{k-1},q_{k})$ such that we have
\begin{eqnarray*}
\left(D_\text{DEL}L_{d}(q_{k-1},q_{k})\right)_{1} & = & D_{s+1}L_{d}(q_{k-1}^{0},...,q_{k-1}^{s})+D_{1}L_{d}(q_{k}^{0},...,q_{k}^{s})\\
\left(D_\text{DEL}L_{d}(q_{k-1},q_{k})\right)_{2} & = & D_{2}L_{d}(q_{k}^{0},...,q_{k}^{s})\\
 & \vdots\\
\left(D_\text{DEL}L_{d}(q_{k-1},q_{k})\right)_{s} & = & D_{s}L_{d}(q_{k}^{0},...,q_{k}^{s}).
\end{eqnarray*}
As in \cite{MaWe01} we can introduce the standard discrete Lagrangian that depends only on two configurations as
\[
L_{d}(q_{k}^{0},q_{k}^{s}) = L_d(q_k),
\]
where $q_k^1,\ldots,q_k^{s-1}$ are implicitly determined by satisfying the internal stage equations~\eqref{eq:del_2}.
Alternatively, one can characterize the discrete Lagrangian in the following way (see \cite{MaWe01}),
\[
L_{d}(q_{k}^{0},q_{k}^{s})=\begin{array}[t]{c}
\text{ext}\\
{\begin{subarray}{c}
q_{k}^{\nu}\in Q\\
\nu\in\{1,...,s-1\}
\end{subarray}}
\end{array}h\sum_{i=1}^{r}b_{i}L(q_{d}(c_{i}h;q_{k})),\dot{q}_{d}(c_{i}h;q_{k})),
\]
meaning that $(s-1)$ configurations $q_k^1,\ldots,q_k^{s-1}$ are determined by extremizing the discrete Lagrangian.
The Lagrangian $L_{d}(q_{k}^{0},q_{k}^{s})$ provides the same iteration scheme as the discrete Lagrangian $L_d(q_k)$.

Let $q_k=(q_k^0,\ldots,q_k^s)$ and let $\alpha(q_k,q_{k+1})=q_{k+1}$ be the translation operator and $\pi(q_k,q_{k+1})=q_k$ the projection operator. The \emph{discrete Lagrangian evolution operator} $X_{L_d}: Q^{s+1}\rightarrow Q^{s+1}\times Q^{s+1}$ satisfies
\[
X_{L_{d}}(q_{k-1})=(q_{k-1},q_{k})\,\,\,\mbox{with}\,\, q_{k-1}^{s}=q_{k}^{0},
\]
\[
\pi\circ X_{L_{d}}(q_{k-1})=q_{k-1} \quad\text{and}\quad D_{\text{DEL}}L_{d}\circ X_{L_{d}}(q_{k-1})=0.
\]
The \emph{discrete Lagrangian map} $F_{L_d}: Q^{s+1} \rightarrow Q^{s+1}$ is defined by
\[
F_{L_{d}}(q_{k-1})=\alpha\circ X_{L_{d}}(q_{k-1})=q_{k}
\]
and generates the sequence of configurations that is denoted as the solution of the Euler-Lagrange equations.
The \emph{discrete Legendre transforms} $\mathbb{F}^{\pm}L_d: Q\times Q\rightarrow T^*Q$ are defined as 
\[
\mathbb{F}^{-}L_{d}(q_{k}^{0},q_{k}^{s})=(q_{k}^{0},p_{k}^{0-})=(q_{k}^{0},-D_{1}L_{d}(q_{k}^{0},q_{k}^{s})),
\]
\[
\mathbb{F}^{+}L_{d}(q_{k-1}^{0},q_{k-1}^{s})=(q_{k}^{0},p_{k}^{0+})=(q_{k}^{0},D_{s+1}L_{d}(q_{k-1}^{0},q_{k-1}^{s})).
\]
From the discrete Euler-Lagrange equations it follows that along the solution of the discrete Euler-Lagrange equations we have
\[
p_k^0 := p_{k}^{0-}=p_{k}^{0+}.
\]
Note that the discrete Lagrangian flow is well-defined, if the discrete Lagrangian is regular, i.e., if the discrete Legendre transforms are local isomorphisms what is assumed in the following.
As shown in Fig.~\ref{fig:discretemaps}, the \emph{discrete Hamiltonian map} $\tilde{F}_{L_d}:T^*Q \rightarrow T^*Q$ is given by
\[
\tilde{F}_{L_{d}}=\mathbb{F}^{\pm}L_{d}\circ F_{L{}_{d}}\circ(\mathbb{F}^{\pm}L_{d})^{-1}.
\]
\begin{figure}[htbp]
\centering
\begin{tikzpicture}[fill=blue!20]
\path (2.5,4) node(a) {$(q_0^0,\ldots,q_0^s)$};
\path (7.5,4) node(b)  {$(q_1^0,\ldots,q_1^s)$};
\path (0,0) node(c) {$(q_0^0,p_0^0)$};
\path (5,0) node(d) {$(q_1^0,p_1^0)$};
\path (10,0) node(e) {$(q_2^0,p_2^0)$};
\path (5.0,4.0) node[anchor=south] (f) {$F_{L_d}$};
\path (1.25,2) node[anchor=east] (g) {$\mathbb{F}^{-}L_d$};
\path (3.9,2) node[anchor=west] (h) {$\mathbb{F}^{+}L_d$};
\path (6.3,2) node[anchor=west] (i) {$\mathbb{F}^{-}L_d$};
\path (8.9,2) node[anchor=west] (j) {$\mathbb{F}^{+}L_d$};
\path (2.5,0) node[anchor=north] (k) {$\tilde{F}_{L_d}$};
\path (7.5,0) node[anchor=north] (l) {$\tilde{F}_{L_d}$};
\draw[thick,black,|->] (a) -- (b);
\draw[thick,black,|->] (a)  -- (c);
\draw[thick,black,|->] (a)  -- (d);
\draw[thick,black,|->] (b)  -- (d);
\draw[thick,black,|->] (b)  -- (e);
\draw[thick,black,|->] (c)  -- (d);
\draw[thick,black,|->] (d)  -- (e);
\end{tikzpicture}
\caption{Correspondence between the discrete Lagrangian and the discrete Hamiltonian
map.}
\label{fig:discretemaps}
\end{figure}
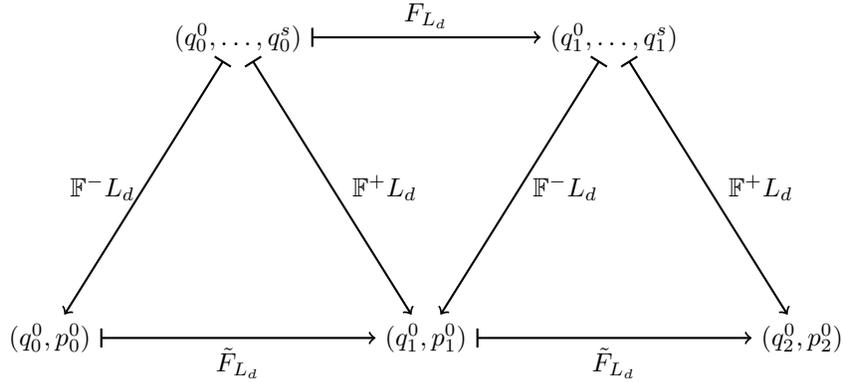

With the proposed method different variational integrators can be constructed. We use the following notation:
$(PsNrQu)$ is an integrator constructed as described above with a polyomial of degree $s$ with $s+1$ control points $(d_i)_{i=0}^s$ and a quadrature formula of order $u$ with $r$ quadrature points. Note that $u$ depends on $r$. If explicitly given, we denote by three letters the quadrature rule in use, i.e.~$Lob$ for Lobatto quadrature ($u=2r-2$) and $Gau$ for Gauss quadrature ($u=2r$).

\begin{example}
\begin{itemize}
\item[(i)] The integrator $(P1N1Q2Gau)$ is based on a polynomial with control points $d_0=0$, $d_1=1$ and the quadrature approximation
\begin{eqnarray*}
\int_{0}^{h}L(q(t),\dot{q}(t))\,dt\approx L_{d}((q_{0}^{0},q_{1}^{0}),h) & = & hL(q_d\left(h/2\right),\dot{q}_d\left(h/2\right))\\
 & = & hL\left(\frac{q_{0}^{0}+q_{1}^{0}}{2},\frac{q_{1}^{0}-q_{0}^{0}}{h}\right)
\end{eqnarray*}
and thus results in the midpoint rule discrete Lagrangian.
\item[(ii)] If the trapezoidal rule is applied as quadrature formula, we obtain the variational integrator $(P1N2Q2Lob)$ with discrete Lagrangian
\begin{eqnarray*}
L_{d}((q_{0}^{0},q_{1}^{0}),h) & = & \frac{h}{2}L(q_d(0),\dot{q}_d(0))+\frac{h}{2}L(q_d(h),\dot{q}_d(h))\\
 & = & \frac{h}{2}L\left(q_{0}^{0},\frac{q_{1}^{0}-q_{0}^{0}}{h}\right)+\frac{h}{2}L\left(q_{1}^{0},\frac{q_{1}^{0}-q_{0}^{0}}{h}\right)
\end{eqnarray*}
which is the discrete Lagrangian of the St\"ormer-Verlet method with factor $\frac{1}{2}$ for a Lagrangian of the form $L(q,\dot{q})= \frac{1}{2} \dot{q}^TM \dot{q} - V(q)$.
\item[(iii)] For a second order polynomial ($s=2$) with control points $d_0=0$, $d_1=\frac{1}{2}$ and $d_2=1$ and by applying Simpson's rule, which is a Lobatto quadrature of order four, we obtain the variational integrator $(P2N3Q4Lob)$ with discrete Lagrangian given by
\begin{eqnarray*}
L_{d}((q_{0}^{0},q_{0}^{1},q_{0}^{2}),h) & = & \frac{h}{6}L(q_d(0),\dot{q}_d(0))+\frac{2h}{3}L(q_d\left(h/2\right),\dot{q}_d\left(h/2\right))+\frac{h}{6}L(q_d(h),\dot{q}_d(h))\\
 & = & \frac{h}{6}L\left(q_{0}^{0},\frac{-3q_{0}^{0}+4q_{0}^{1}-q_{0}^{2}}{h}\right)
  +\frac{2h}{3}L\left(q_{0}^{1},\frac{q_{0}^{2}-q_{0}^{0}}{h}\right)
  +\frac{h}{6}L\left(q_{0}^{2},\frac{q_{0}^{0}-4q_{0}^{1}+3q_{0}^{2}}{h}\right).
\end{eqnarray*} 
\end{itemize}
\end{example}

\begin{remark}[Implementation]\label{rem:implem}
For given configurations $q_0^0,\ldots,q_{0}^{s-1},q_1^0$ we compute the unknown configuration $q_2^0$ by performing one step of the discrete Lagrangian evolution
\[
(q_{0}^{0},...,q_{0}^{s-1},q_{1}^{0})\overset{{F_{L_{d}}} }{\longrightarrow}  (q_{1}^{0},...,q_{1}^{s-1},q_{2}^{0}).
\]
To this end, the system of $s$ nonlinear equations~\eqref{eq:del_1}-\eqref{eq:del_2} is solved for the $s$ unknowns $(q_{1}^{1},...,q_{1}^{s-1},q_{2}^{0})$. For the numerical solution, a Newton method can be used. 
For given initial configuration $q(0)$ and momentum $p(0)$, in the first step, \eqref{eq:del_1} is replaced by the discrete Legendre transform 
\[
p(0)=-D_{1}L_{d}(q_{0}^{0},\ldots,q_{0}^{s})
\]
and the system of equations is solved for $(q_0^1,\ldots,q_1^0)$.
For the quadrature rules considered here, \eqref{eq:del_1}-\eqref{eq:del_2}  typically provide an implicit scheme for nonlinear systems that has to be solved by an iterative solver all at once. 
\end{remark}

\begin{remark}[Galerkin methods]\label{rem:Gal}
For the Galerkin variational integrators as introduced in \cite{MaWe01}, Section~2.2.6, the number of quadrature points of the quadrature formula $(b_i,c_i)_{i=1}^r$ is fixed to $r=s$, where $s$ is the degree of the polynomial $q_d$. In our notation that means that only the methods $(PsNsQu)$ are investigated, which are shown to be equivalent to partitioned symplectic Runge-Kutta methods.
In particular, it is pointed out that the integrator $(PsNsQ2sGau)$, which uses the Gauss quadrature formula, corresponds to the \emph{collocation Gauss-Legendre rule}, whereas $(PsNsQ2s-2Lob)$ yields the standard Lobatto IIIA-IIIB partitioned Runge-Kutta method. For both methods the order is determined by the order of quadrature rule, i.e., $(PsNsQ2sGau)$ is of order $2s$ and $(PsNsQ2s-2Lob)$ is of order $2s-2$ (cf.~\cite{HaLuWa}). Although the Gauss quadrature formula leads to higher order schemes, in particular for stiff systems the choice of a quadrature rule involving $c_s=1$ leads to better numerical performance (cf.~\cite{HaLuWa, MaWe01}). If, in addition, one wishes to use a symmetric quadrature rule, i.e., $c_1=0$, the Legendre-Lobatto quadrature rule provides the highest possible order.
 \end{remark}

\section{Properties of Galerkin variational integrators}\label{sec:prop}

In this section properties of the Galerkin variational integrators, such as symplecticity, momentum-preservation, time reversibility and linear stability are studied.

\subsection{Preservation properties of variational integrators}\label{subsec:prop}

As already mentioned in Section~\ref{sec:varmech} variational integrators are structure-preserving, in particular they are symplectic and momentum-preserving. In this section we briefly repeat the notion of symplecticity and first integrals and their discrete counterparts.

\subsubsection{Symplecticity and energy behavior}
 
In the case of conservative systems (as considered here), the flow on $T^*Q$ of the Euler-Lagrange equations preserves the canonical symplectic form $\Omega = dq^i \wedge dp^i = \textbf{d} \theta$ of the Hamiltonian system, where $\theta = p^i dq^i$ is the canonical one-form. 
It is well known (cf.~e.g.~\cite{MaWe01}) that variational integrators are symplectic, that is the same property holds for the discrete flow of the discrete Euler-Lagrange equations. As a consequence, the canonical discrete symplectic form $\Omega = dq_0^i \wedge dp_0^i$ is exactly preserved for the discrete solution which is in particular true for the variational integrators presented in this work.

By using techniques from backward error analysis it is shown (cf.~e.g.~\cite{HaLuWa}) that symplectic integrators have excellent energy properties, meaning that for long-time integrations there is no artificial energy growth or decay due to numerical errors. 
This property is demonstrated numerically in Section~\ref{sec:numex} and illustrates the advantage of variational integrators over e.g.~nonsymplectic Runge-Kutta integrators in particular for long-time simulations.

\subsubsection{Preservation of first integrals}
The Noether theorem provides first integrals of the Euler-Lagrange equations which are also called \emph{momentum maps}. 
The discrete Noether theorem states that these invariants are also preserved for the discrete solution. The following two theorems are taken from \cite{HaLuWa}.

\begin{theorem}[Noether theorem]\label{th:Noether}
Let $L(q,\dot{q})$ be a regular Lagrangian. Suppose $G=\{ g_v: v\in \mathbb{R} \}$ is a one-parameter group of transformations ($g_v \circ g_w = g_{v+w}$) which leaves the Lagrangian invariant such that
 \[
 L(g_v(q),g'_v(q)\dot{q}) = L(q,\dot{q})\quad \forall v\in \mathbb{R}\; \forall (q,\dot{q})\in TQ.
 \]
 Let $a(q)= \frac{d}{dv} g_v(q) |_{v=0}$ be defined as the vector field with flow $g_v(q)$. Then
 \begin{equation}\label{eq:inv}
 I(q,p) = p^T a(q)
 \end{equation}
 is a first integral of the Euler-Lagrange equations.
\end{theorem}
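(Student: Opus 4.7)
The plan is to exploit the invariance of $L$ under the one-parameter group together with the Euler-Lagrange equations. First, I would differentiate the identity $L(g_v(q), g'_v(q)\dot{q}) = L(q,\dot{q})$ with respect to $v$ at $v=0$. Since the right-hand side is independent of $v$, the derivative vanishes; since $g_0 = \mathrm{id}$ and $g'_0 = I$, the chain rule on the left yields the pointwise identity
\begin{equation*}
\frac{\partial L}{\partial q}(q,\dot{q})\, a(q) + \frac{\partial L}{\partial \dot{q}}(q,\dot{q})\, a'(q)\dot{q} = 0,
\end{equation*}
where I have used $\frac{d}{dv}|_{v=0} g_v(q) = a(q)$ and $\frac{d}{dv}|_{v=0} g'_v(q)\dot{q} = a'(q)\dot{q}$.

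Second, I would evaluate $\frac{d}{dt} I(q,p)$ along a solution of the Euler-Lagrange equations. By the product rule,
\begin{equation*}
\frac{d}{dt}\bigl(p^T a(q)\bigr) = \dot{p}^{\,T} a(q) + p^T a'(q)\dot{q}.
\end{equation*}
Substituting the momentum definition $p = \partial L/\partial \dot{q}$ and the Euler-Lagrange equation $\dot{p} = \partial L/\partial q$ turns this into precisely the left-hand side of the identity derived in the first step. Hence $\frac{d}{dt} I(q,p) = 0$ along the flow, which is the claim.

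The only technical subtlety is the interchange of derivatives $\frac{\partial}{\partial v}\frac{\partial g_v}{\partial q}\big|_{v=0} = \frac{\partial a}{\partial q}$ used when differentiating the velocity argument; this is justified by the smoothness of the group action $g_v$ and equality of mixed partials. Regularity of $L$ is not needed for the conservation itself, but only to ensure that the Legendre transform defines a genuine Hamiltonian flow on $T^*Q$ on which $I$ is evaluated. In short, the proof is a direct two-line calculation once the infinitesimal version of the symmetry condition is in hand, and there is no substantive obstacle beyond a careful application of the chain rule.
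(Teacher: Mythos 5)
Your argument is correct and is precisely the standard proof: differentiate the invariance identity in $v$ at $v=0$ to get $\frac{\partial L}{\partial q}a(q)+\frac{\partial L}{\partial \dot q}a'(q)\dot q=0$, then combine with the Euler--Lagrange equations to see $\frac{d}{dt}\bigl(p^Ta(q)\bigr)=0$. The paper itself gives no proof but defers to Hairer--Lubich--Wanner, where exactly this computation appears, so there is nothing to contrast.
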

The discrete analog of the Noether theorem is stated as follows.
\begin{theorem}[Discrete Noether theorem]\label{th:discreteNoether}
Suppose the one-parameter group of transformations $G=\{ g_v: v\in \mathbb{R} \}$ leaves the discrete Lagrangian $L_d(q_0^0,q_1^0)$ invariant, that means
\[
L_d(g_v(q_0^0),g_v(q_1^0)) = L_d(q_0^0,q_1^0) \quad \forall v\in\mathbb{R}\;\forall (q_0^0,q_1^0)\in Q\times Q.
\]
Then \eqref{eq:inv} is an invariant of the discrete Hamiltonian map $\tilde{F}_{L_d}$, i.e., 
\[
I\circ \tilde{F}_{L_d}(q_k^0,p_k^0)= I(q_k^0,p_k^0).
\]
\end{theorem}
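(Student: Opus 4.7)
The plan is to mimic the continuous Noether argument at the discrete level, using the two-point form $L_d(q_k^0,q_{k+1}^0)$ introduced at the end of Section~\ref{sub:discrteHam} (in which the internal stages $q_k^1,\ldots,q_k^{s-1}$ are eliminated via the internal stage equations \eqref{eq:del_2}). The invariance hypothesis is formulated directly in terms of this reduced Lagrangian, so the internal stages play no role in the algebra that follows; the only thing to check, tacitly, is that if $G$ leaves the full multi-point discrete Lagrangian invariant under a simultaneous action on all control points, then the induced two-point $L_d(q_0^0,q_1^0)$ is invariant as well, because the extremizing internal stages transform equivariantly under $G$. This is the kind of consistency remark I would include but not belabor.

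The key computation is to differentiate
\[
L_d(g_v(q_0^0),g_v(q_1^0))=L_d(q_0^0,q_1^0)
\]
with respect to $v$ at $v=0$, using $a(q)=\tfrac{d}{dv}g_v(q)\big|_{v=0}$. This yields the discrete infinitesimal invariance identity
\[
D_1 L_d(q_0^0,q_1^0)\cdot a(q_0^0)+D_2 L_d(q_0^0,q_1^0)\cdot a(q_1^0)=0.
\]
Now I would substitute the discrete Legendre transforms from Section~\ref{sub:discrteHam}, namely $-D_1 L_d(q_0^0,q_1^0)=p_0^{0-}$ and $D_2 L_d(q_0^0,q_1^0)=p_1^{0+}$ (for the reduced two-point Lagrangian $D_{s+1}$ plays the role of $D_2$), to rewrite this as
\[
-p_0^{0-}\cdot a(q_0^0)+p_1^{0+}\cdot a(q_1^0)=0.
\]

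Along a trajectory of the discrete Euler-Lagrange equations one has the matching condition $p_k^0:=p_k^{0-}=p_k^{0+}$, so the identity becomes $p_1^0\cdot a(q_1^0)=p_0^0\cdot a(q_0^0)$, i.e.\ $I(q_1^0,p_1^0)=I(q_0^0,p_0^0)$. Since by definition $\tilde F_{L_d}(q_0^0,p_0^0)=(q_1^0,p_1^0)$, this is exactly $I\circ\tilde F_{L_d}=I$, and iterating gives preservation along the whole discrete Hamiltonian flow.

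The main obstacle I anticipate is not the algebra above, which is essentially a one-line chain-rule computation, but rather the bookkeeping between the multi-point discrete Lagrangian $L_d(q_k)$ actually constructed in Section~\ref{subsec:approxint} and the reduced two-point Lagrangian $L_d(q_k^0,q_k^s)$ in whose terms the theorem is stated. To be rigorous one should verify that if $G$ acts diagonally on $Q^{s+1}$ and leaves the Lagrangian $L$ on $TQ$ invariant, then the quadrature-based discrete Lagrangian $L_d(q_k)=h\sum_i b_i L(q_d(c_i h),\dot q_d(c_i h))$ inherits this invariance (which follows because the polynomial interpolant and its derivative commute with the action of $G$ on control points when $g_v$ is affine, and more generally one uses invariance of $L$ pointwise in $t$), and that extremization over internal stages preserves invariance. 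Once that equivariance step is in place, the computation above closes the proof without further work.
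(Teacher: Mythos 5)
Your argument is correct, and it is essentially the standard proof: the paper itself does not spell out a proof of Theorem~\ref{th:discreteNoether} but defers to \cite{HaLuWa}, where exactly this computation appears --- differentiate the invariance identity at $v=0$, identify $-D_1L_d$ and $D_2L_d$ with the discrete momenta $p_0^{0-}$ and $p_1^{0+}$ via the discrete Legendre transforms, and invoke the momentum-matching condition $p_k^{0-}=p_k^{0+}$ along solutions of the discrete Euler--Lagrange equations. One small remark: the ``bookkeeping'' issue you flag at the end (whether invariance of $L$ and equivariance of the interpolant pass to the reduced two-point $L_d(q_0^0,q_1^0)$) is not part of this theorem's burden, since the hypothesis is stated directly for the two-point discrete Lagrangian; that transfer is precisely the content of the separate Theorem~\ref{th:invLd}, which the paper proves independently.
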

Proofs of Theorems~\ref{th:discreteNoether} and \ref{th:discreteNoether} can be found in \cite{HaLuWa}.

Note that the invariant of the discrete Hamiltonian map only equals the first integral of the Euler-Lagrange equations if the discrete Lagrangian $L_d$ inherits the invariance of the Lagrangian $L$. In the following we show under which condition this invariance is inherited. 

\begin{definition}[Equivariance]
Let $\{ g_v: v\in\mathbb{R} \}$ be a one-parameter group of transformation. The interpolation polynomial $q_d$ of $(PsNsQu)$ is \emph{equivariant w.r.t.~$\{ g_v: v\in\mathbb{R} \}$} if
\begin{equation}\label{eq:equiq}
g_v(q_d(t; (q_0^0,\ldots,q_0^s))) = q_d(t; (g_v(q_0^0),\ldots,g_v(q_0^s))),
\end{equation}
\begin{equation}\label{eq:equiv}
g'_v(q_d(t; (q_0^0,\ldots,q_0^s))) \dot{q}(t;(q_0^0,\ldots,q_0^s)) = \dot{q}_d(t; (g_v(q_0^0),\ldots,g_v(q_0^s))).
\end{equation}
\end{definition}
Note that \eqref{eq:equiv} follows from \eqref{eq:equiq} by applying the chain rule.

\begin{theorem}[Invariance of discrete Lagrangian]\label{th:invLd}
Suppose that the interpolation polynomial $q_d$ of $(PsNrQu)$ is equivariant and that the regular Lagrangian $L$ is invariant w.r.t.~the one-parameter group $G=\{ g_v: v\in\mathbb{R} \}$. Then the discrete Lagrangian $L_d$ of $(PsNrQu)$ is invariant w.r.t.~$G$.
\end{theorem}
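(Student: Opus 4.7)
The plan is to prove the invariance by direct substitution into the definition of $L_d$, exploiting equivariance of $q_d$ to push the group action $g_v$ inside the quadrature-summed expression, and then using invariance of $L$ to eliminate $g_v$ pointwise at every quadrature node.

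First I would write out $L_d$ evaluated on the transformed configurations $(g_v(q_k^0),\ldots,g_v(q_k^s))$ using the definition from Section~\ref{subsec:approxint}, namely
\[
L_d(g_v(q_k^0),\ldots,g_v(q_k^s)) = h\sum_{i=1}^r b_i\, L\bigl(q_d(c_i h;(g_v(q_k^0),\ldots,g_v(q_k^s))),\, \dot{q}_d(c_i h;(g_v(q_k^0),\ldots,g_v(q_k^s)))\bigr).
\]
Next I would apply the equivariance hypotheses \eqref{eq:equiq} and \eqref{eq:equiv}, which transform the arguments of $L$ at each quadrature point $c_i h$ into $g_v(q_d(c_i h;q_k))$ and $g'_v(q_d(c_i h;q_k))\,\dot{q}_d(c_i h;q_k)$ respectively. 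This step works nodewise since equivariance holds for all $t\in [0,h]$, and the quadrature nodes $c_i h$ lie in that interval.

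Then I would invoke the invariance of $L$ under $G$, i.e.\ $L(g_v(q),g'_v(q)\dot q) = L(q,\dot q)$, termwise in the quadrature sum. Since the weights $b_i$ and nodes $c_i$ do not depend on the configurations, this yields
\[
L_d(g_v(q_k^0),\ldots,g_v(q_k^s)) = h\sum_{i=1}^r b_i\, L\bigl(q_d(c_i h;q_k),\, \dot{q}_d(c_i h;q_k)\bigr) = L_d(q_k^0,\ldots,q_k^s),
\]
which is the desired invariance.

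The argument is essentially a three-line calculation and I do not expect any serious obstacle: the only subtlety is ensuring the roles of $q$-equivariance and $\dot q$-equivariance match the two slots of $L$, but the remark following the definition (that \eqref{eq:equiv} follows from \eqref{eq:equiq} by the chain rule) already guarantees consistent use. Regularity of $L$ is not actually needed for this invariance statement itself; it is only needed implicitly to make the discrete flow well-defined, so no further care is required beyond the stated hypotheses.
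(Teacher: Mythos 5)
Your proof is correct and uses the same mechanism as the paper's: equivariance of $q_d$ pushes $g_v$ onto the arguments of $L$ at each quadrature node $c_i h$, and invariance of $L$ then removes it termwise in the quadrature sum. The only divergence is that the paper carries out this computation on the extremized two-point Lagrangian $L_d(q_0^0,q_1^0)$ --- the object that actually enters the discrete Noether theorem --- converting the extremization over $g_v(q_0^\nu)$ into one over $q_0^\nu$ because $g_v$ is a bijection of $Q$; your identity for the multi-point $L_d(q_k^0,\ldots,q_k^s)$ yields the same conclusion once that one extra observation is added.
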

\begin{proof}
Let $q_{0}=(q_{0}^{0},\ldots,q_{0}^{s})$ with $q_{0}^{s}=q_{1}^{0}$ and $g_{v}\cdot q_{0}=(g_{v}(q_{0}^{0}),...,g_{v}(q_{0}^{s-1}),g_{v}(q_{1}^{0}))$. Let $(b_{i},c_{i})_{i=1}^{r}$ be the quadrature formula that corresponds to $(PsNrQu)$. With the invariance of $L$ and the equivariance of $q_d$ we have that
\begin{eqnarray*}
L_{d}(g_{v}(q_{0}^{0}),g_{v}(q_{1}^{0})) & = & \begin{array}[t]{c}
\text{ext}\\
{\begin{subarray}{c}
g_{v}(q_{0}^{\nu})\in Q\\
\nu\in\{1,...,s-1\}
\end{subarray}}
\end{array}h\sum_{i=1}^{r}b_{i}L(q_{d}(c_{i}h;g_{v}\cdot q_{0}),\dot{q}_{d}(c_{i}h;g_{v}\cdot q_{0}))\\
 & = & \begin{array}[t]{c}
\text{ext}\\
{\begin{subarray}{c}
q_{0}^{\nu}\in Q\\
\nu\in\{1,...,s-1\}
\end{subarray}}
\end{array}h\sum_{i=1}^{r}b_{i}L(g_{v}(q_{d}(c_{i}h;q_{0})),g'_{v}(q_{d}(c_{i}h;q_{0}))\dot{q}_{d}(c_{i}h;q_{0}))\\
 & = & \begin{array}[t]{c}
\text{ext}\\
{\begin{subarray}{c}
q_{0}^{\nu}\in Q\\
\nu\in\{1,...,s-1\}
\end{subarray}}
\end{array}h\sum_{i=1}^{r}b_{i}L(q_{d}(c_{i}h;q_{0})),\dot{q}_{d}(c_{i}h;q_{0}))\\
 & = & L_{d}(q_{0}^{0},q_{1}^{0}).
\end{eqnarray*}
\end{proof}

A general form of the statement of Theorem~\ref{th:invLd} for Galerkin Lie group variational integrators can also be found in \cite{Leok2011}.

\begin{remark}\label{rem:lingroup}
From Theorem~\ref{th:invLd} it follows that for linear group transformations the discrete Lagrangian $L_d$ of $(PsNrQu)$ inherits the invariance of the Lagrangian $L$ (cf.~\cite{MaWe01}).
\end{remark}

The properties described in Section~\ref{subsec:prop} are valid for all variational integrators. In the following we present special properties of the variational integrators constructed in this work.

\subsection{Time reversibility}\label{subsec:timerevers}

A further geometric property of Hamiltonian systems is the time reversibility. 
It seems likely to use numerical methods that produce a reversible numerical flow when they are applied to a reversible Hamiltonian system. Furthermore, the numerical solution has a long-time behavior similar to the exact solution (see \cite{HaLuWa}), and thus time reversibility is a desirable property also for the variational integrators presented in this work. In the following, we repeat the definitions of time reversibility and the adjoint discrete Lagrangian (following \cite{HaLuWa} and \cite{MaWe01}) und show, under which conditions the variational integrators $(PsNrQu)$ are time-reversible.  

\begin{definition}[Time reversibility and adjoint (\cite{HaLuWa})]
A numerical one-step method $\Phi_d$ is called \emph{symmetric} or \emph{time-reversible} if it satisfies
\[\Phi_d^h \circ \Phi_d^{-h} = id \quad\text{or equivalently}\quad \Phi_d^h=(\Phi_d^{-h})^{-1}.
\]
The \emph{adjoint} method denoted by $\Phi_h^*$ is defined by
\[
(\Phi_d^h)^* = (\Phi_d^{-h})^{-1}.
\]
A method is called \emph{self-adjoint} if we have
\[
\Phi_d^*=\Phi_d.
\]
\end{definition}
Thus, symmetric, time-reversible, and self-adjoint are equivalent notions.
In the following, we show that the integrators $(PsNrQ2rGau)$ and $(PsNrQ2r-2Lob)$ are self-adjoint and thus time-reversible methods.
\begin{definition}[Adjoint of the discrete Lagrangian (\cite{MaWe01})]
The \emph{adjoint discrete Lagrangian} $L_d^*$ of the discrete Lagrangian is given by
\[
L_d^*(q_0^0,q_1^0,h)=-L_d(q_1^0,q_0^0,-h).
\]
The discrete Lagrangian is \emph{self-adjoint} if
\[
L_d(q_0^0,q_1^0,h)=L_d^*(q_0^0,q_1^0,h).
\]
\end{definition}
The following well-known theorem (see e.g.~\cite{MaWe01}) connects the adjoint of the discrete Lagrangian with the adjoint of the discrete Hamiltonian flow.
\begin{theorem}\label{th:adjointflow}
If the discrete Lagrangian $L_d$ has a discrete Hamiltonian
map $\tilde{F}_{L_d}$, then the discrete Hamiltonian map of the adjoint discrete Lagrangian $L_d^*$  
equals the adjoint map, i.e.~$\tilde{F}_{L_d^*}= \tilde{F}_{L_d}^*$.
If the discrete
Lagrangian is self-adjoint, then the method is self-adjoint. Conversely, if
the method is self-adjoint, then the discrete Lagrangian is equivalent\footnote{Two discrete Lagrangian are equivalent if their discrete Hamiltonian map are the same.} to a
self-adjoint discrete Lagrangian.
\end{theorem}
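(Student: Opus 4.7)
The plan is to treat the theorem's three assertions in turn: (a) the identification $\tilde{F}_{L_d^*}=\tilde{F}_{L_d}^*$, (b) self-adjointness of $L_d$ implies self-adjointness of the method, and (c) the converse up to equivalence. The first assertion is the workhorse; the other two follow quickly once it is in place.

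For (a) I would compute the discrete Legendre transforms of $L_d^*$ from the definition $L_d^*(q_0^0,q_1^0,h)=-L_d(q_1^0,q_0^0,-h)$ by straightforward differentiation. Writing $p_0=-D_1L_d^*(q_0^0,q_1^0,h)$ and $p_1=D_2L_d^*(q_0^0,q_1^0,h)$ one gets, via the chain rule,
\begin{align*}
p_0 &= D_2 L_d(q_1^0,q_0^0,-h), \\
p_1 &= -D_1 L_d(q_1^0,q_0^0,-h),
\end{align*}
which are exactly the Legendre relations defining $\tilde{F}_{L_d}^{-h}$ sending $(q_1^0,p_1)\mapsto(q_0^0,p_0)$. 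Hence $\tilde{F}_{L_d^*}^{\,h}=(\tilde{F}_{L_d}^{-h})^{-1}=\tilde{F}_{L_d}^{*}$, which is (a).

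Assertion (b) is then immediate: if $L_d=L_d^*$, applying (a) gives $\tilde{F}_{L_d}=\tilde{F}_{L_d^*}=\tilde{F}_{L_d}^*$, so the method is self-adjoint. For the converse (c), I would construct a candidate explicitly by symmetrization, setting
\[
\tilde{L}_d(q_0^0,q_1^0,h):=\tfrac{1}{2}\bigl(L_d(q_0^0,q_1^0,h)+L_d^*(q_0^0,q_1^0,h)\bigr).
\]
Self-adjointness of $\tilde{L}_d$ is a direct check using $L_d^{**}=L_d$: one computes $\tilde{L}_d^*=\tfrac{1}{2}(L_d^*+L_d^{**})=\tilde{L}_d$. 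It remains to show that $\tilde{L}_d$ is equivalent to $L_d$, i.e.~generates the same discrete Hamiltonian map. By hypothesis $\tilde{F}_{L_d}$ is self-adjoint, so by (a) we have $\tilde{F}_{L_d}=\tilde{F}_{L_d^*}$. Thus $L_d$ and $L_d^*$ are equivalent discrete Lagrangians; by the standard characterization of equivalent (regular) discrete Lagrangians they differ by a discrete null Lagrangian, $L_d^*-L_d=f(q_1^0)-f(q_0^0)$ for some $f$. Then $\tilde{L}_d=L_d+\tfrac{1}{2}\bigl(f(q_1^0)-f(q_0^0)\bigr)$ differs from $L_d$ by a null Lagrangian, hence is equivalent to it.

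I expect the technical hurdle to be the last step of (c), namely justifying that two discrete Lagrangians producing the same discrete Hamiltonian map differ by a discrete null Lagrangian of the form $f(q_1^0)-f(q_0^0)$. This characterization uses regularity of $L_d$ (so that the discrete Legendre transforms are local isomorphisms, as assumed in the paper) and is the discrete analog of the classical fact that two Lagrangians with the same Euler--Lagrange equations differ by a total time derivative; it is invoked in \cite{MaWe01} and the present proof inherits it. Everything else reduces to differentiating the relation $L_d^*(q_0^0,q_1^0,h)=-L_d(q_1^0,q_0^0,-h)$ and matching Legendre transforms.
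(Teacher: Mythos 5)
Your parts (a) and (b) are correct and follow the same route as the proof the paper points to (the paper does not reproduce an argument of its own but defers entirely to \cite{MaWe01}, Theorem 2.4.1): differentiate $L_d^*(q_0^0,q_1^0,h)=-L_d(q_1^0,q_0^0,-h)$, observe that the resulting Legendre relations are exactly those of $\tilde{F}_{L_d}^{-h}$ read backwards, and conclude $\tilde{F}_{L_d^*}^{h}=(\tilde{F}_{L_d}^{-h})^{-1}=\tilde{F}_{L_d}^{*}$; (b) is then immediate, as you say.

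Part (c), however, rests on a false lemma. The footnote in the statement defines equivalence of discrete Lagrangians as equality of their discrete \emph{Hamiltonian} maps, and for that notion the correct characterization is not ``differ by a discrete null Lagrangian $f(q_1^0)-f(q_0^0)$'' but ``differ by a (locally) constant''. Indeed, $L_d$ is a generating function of the first kind for $\tilde{F}_{L_d}$: on the graph of the map one has $dL_d=-p_0\,dq_0^0+p_1\,dq_1^0$, so two regular discrete Lagrangians generating the same Hamiltonian map have the same differential and differ by a constant (and a nonzero constant is not of the form $f(q_1^0)-f(q_0^0)$). Conversely, adding $f(q_1^0)-f(q_0^0)$ with $f$ non-constant shifts both discrete Legendre transforms by the fiber translation $T_f:(q,p)\mapsto(q,p+\nabla f(q))$, so the new Hamiltonian map is $T_f\circ\tilde{F}_{L_d}\circ T_f^{-1}$, which is conjugate to but in general different from $\tilde{F}_{L_d}$; what a null Lagrangian preserves is the discrete Lagrangian map (the DEL equations), not the Hamiltonian map. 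Hence your closing step ``$\tilde{L}_d$ differs from $L_d$ by a null Lagrangian, hence is equivalent to it'' does not follow as written. The repair is short: from $\tilde{F}_{L_d}=\tilde{F}_{L_d^*}$ conclude $L_d^*-L_d=c$ with $c$ constant, so $\tilde{L}_d=L_d+c/2$ has the same derivatives as $L_d$ and therefore the same discrete Hamiltonian map; with that substitution your symmetrization argument closes correctly.
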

A proof of this theorem can be found in \cite{MaWe01}, Theorem 2.4.1.
To show the time reversibility, we first show that the discrete Lagrangian is self-adjoint.
\begin{theorem}\label{th:lagadj}
Let $L_d$ be the discrete Lagrangian of $(PsNrQu)$ with symmetric quadrature formula $(b_i,c_i)_{i=1}^r$ and interpolation polynomial $q_d$ with symmetrically distributed control points $(d_i)_{i=0}^s$, i.e., $b_i = b_{r+1-i}$, $c_i+c_{r+1-i}=1$, $i=1,\ldots,r$, and $d_i = 1-d_{s-i}$, $i=0,\ldots,s$. Then $L_d$ is self-adjoint.
\end{theorem}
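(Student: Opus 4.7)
The plan is to verify $L_d^*(q_0^0,q_1^0,h) = L_d(q_0^0,q_1^0,h)$ by rewriting the defining quadrature sum for $-L_d(q_1^0,q_0^0,-h)$ and showing it coincides, term for term, with that for $L_d(q_0^0,q_1^0,h)$. The main ingredients are the symmetry relations on $(b_i,c_i)$ and $(d_i)$ together with a symmetry identity for the Lagrange basis polynomials.

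First I would work at the level of the interpolating polynomial. Write the internal configurations that parametrise $L_d(q_1^0,q_0^0,-h)$ as $\tilde q = (\tilde q^0,\ldots,\tilde q^s)$ with $\tilde q^0 = q_1^0$, $\tilde q^s = q_0^0$, and make the change of extremisation variable $\tilde q^\nu = q_0^{s-\nu}$ for $\nu=1,\ldots,s-1$, which is a bijection of the internal points since $\nu\mapsto s-\nu$ permutes $\{1,\ldots,s-1\}$. The key lemma to establish is that under the hypothesis $d_i = 1-d_{s-i}$, the Lagrange basis polynomials satisfy
\[
l_{s-\nu,s}(\sigma) = l_{\nu,s}(1-\sigma),
\]
which follows by substituting $d_i = 1-d_{s-i}$ in the product defining $l_{s-\nu,s}$ and reindexing the factors. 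Using this identity together with $\tilde q^\nu = q_0^{s-\nu}$, the interpolating polynomial with step $-h$ can be rewritten as
\[
\tilde q_d(t;\tilde q,-h) = \sum_{\nu=0}^s q_0^{s-\nu}\, l_{\nu,s}\!\left(\tfrac{t}{-h}\right) = \sum_{\mu=0}^s q_0^\mu\, l_{\mu,s}\!\left(\tfrac{h+t}{h}\right) = q_d(h+t;q_0,h),
\]
and differentiating in $t$ gives $\dot{\tilde q}_d(t;\tilde q,-h) = \dot q_d(h+t;q_0,h)$.

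Next I would substitute this into the quadrature expression for $L_d(q_1^0,q_0^0,-h)$. At node $t = c_i(-h)$, the argument of $L$ becomes $(q_d((1-c_i)h;q_0,h), \dot q_d((1-c_i)h;q_0,h))$. Using the symmetry of the quadrature, $1-c_i = c_{r+1-i}$ and $b_i = b_{r+1-i}$, a reindexing $j = r+1-i$ converts the sum into exactly $\sum_{j=1}^r b_j L(q_d(c_j h;q_0,h),\dot q_d(c_j h;q_0,h))$. The prefactor $-h$ of the quadrature combines with the outer minus sign in the definition of $L_d^*$ to produce $+h$, so that
\[
L_d^*(q_0^0,q_1^0,h) = \mathop{\mathrm{ext}}_{q_0^1,\ldots,q_0^{s-1}} h\sum_{j=1}^r b_j\, L(q_d(c_j h;q_0,h),\dot q_d(c_j h;q_0,h)) = L_d(q_0^0,q_1^0,h).
\]

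The only mildly subtle step is the basis identity $l_{s-\nu,s}(\sigma) = l_{\nu,s}(1-\sigma)$; everything else is a bookkeeping exercise involving the three symmetries (control points, nodes, weights) and the sign of $h$. I anticipate no real obstacle, provided one is careful that the change of variable $\tilde q^\nu \leftrightarrow q_0^{s-\nu}$ is a genuine bijection on the interior indices so that the extremum operation is preserved.
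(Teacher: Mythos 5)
Your proposal is correct and follows essentially the same route as the paper's proof: the same Lagrange-basis symmetry $l_{s-\nu,s}(\tau)=l_{\nu,s}(1-\tau)$, the same reversal of the internal configuration tuple, and the same reindexing of the quadrature sum via $b_i=b_{r+1-i}$, $c_i+c_{r+1-i}=1$. The only cosmetic difference is that you combine the sign flip of $h$ and the tuple reversal into the single identity $q_d(t;\tilde q,-h)=q_d(h+t;q_0,h)$, whereas the paper derives the two reflections separately before composing them.
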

\begin{proof}
We have
\begin{eqnarray}
q_{d}(-t;q_{0},-h) & = & \sum_{\nu=0}^{s}q_{0}^{\nu}l_{\nu,s}\left(t/h\right)=q_{d}(t;q_{0},h),\label{eq:selbstad}\\
\dot{q}_{d}(-t;q_{0},-h) & = & \frac{1}{-h}\sum_{\nu=0}^{s}q_{0}^{\nu}\dot{l}_{\nu,s}\left(t/h\right)=-\dot{q}_{d}(t;q_{0},h).\label{eq:selbstadII}
\end{eqnarray}
Due to the symmetry of the control points $d_i$ it follows for the Lagrange-polynomials $l_{\nu,s}$ with $\tau \in [0,1]$
\begin{eqnarray*}
l_{\nu,s}(1-\tau) & = & \prod_{0\leq i\leq s,i\neq \nu}\frac{(1-\tau)-d_{i}}{d_{\nu}-d_{i}}=  \prod_{0\leq i\leq s,i\neq \nu}\frac{(-\tau)+(1-d_{i})}{(-1+d_{\nu})+(1-d_{i})}\\
 & = & \prod_{0\leq i\leq s,i\neq \nu}\frac{(-\tau)+d_{s-i}}{-d_{s-\nu}+d_{s-i}} =  \prod_{0\leq i\leq s,i\neq \nu}\frac{\tau-d_{s-i}}{d_{s-\nu}-d_{s-i}}\\
 & = & l_{s-\nu,s}(\tau).
\end{eqnarray*}
With $q_{0}=(q_{0}^{0},...,q_{0}^{s})$, $q_{0}^{s}=q_{1}^{0}$, and $\tilde{q}_{0}=(q_{0}^{s},...,q_{0}^{0})$ we have for $t\in[0,h]$
\begin{eqnarray*}
q_{d}(t,\tilde{q}_{0},h) & = & \sum_{\nu=0}^{s}q_{0}^{s-\nu}l_{\nu,s}\left(\frac{t}{h}\right) =  \sum_{k=0}^{s}q_{0}^{k}l_{s-k,s}\left(\frac{t}{h}\right)\\
 & = & \sum_{k=0}^{s}q_{0}^{k}l_{k,s}\left(1-\frac{t}{h}\right) = \sum_{k=0}^{s}q_{0}^{k}l_{k,s}\left(\frac{h-t}{h}\right)\\
 & = & q_{d}(h-t;q_{0},h)
\end{eqnarray*}
and by taking the time derivative we have
\begin{eqnarray*}
\dot{q}_{d}(h-t;q_{0},h) & = & -\dot{q}_{d}(t;\tilde{q}_{0},h).
\end{eqnarray*}
Thus, together with \eqref{eq:selbstad}-\eqref{eq:selbstadII} we obtain
\begin{eqnarray*}
q_{d}(-t;\tilde{q}_{0},-h) & = & q_{d}(t;\tilde{q}_{0},h) =  q_{d}(h-t;q_{0},h),\\
\dot{q}_{d}(-t;\tilde{q}_{0},-h) & = & -\dot{q}_{d}(t;\tilde{q}_{0},h)= \dot{q}_{d}(h-t;q_{0},h).
\end{eqnarray*}
By substituting these expressions in the adjoint discrete Lagrangian we have with the symmetry of the quadrature formula $(b_i,c_i)_{i=0}^r$
\begin{eqnarray*}
-L_{d}(q_{1}^{0},q_{0}^{0},-h) & = & \begin{array}[t]{c}
\text{ext}\\
{\begin{subarray}{c}
q_{0}^{\nu}\in Q\\
\nu\in\{1,...,s-1\}
\end{subarray}}
\end{array}-(-h)\sum_{i=1}^{r}b_{i}L(q_{d}(-c_{i}h;\tilde{q}_{0},-h)),\dot{q}_{d}(-c_{i}h;\tilde{q}_{0},-h))\\
 & = & \begin{array}[t]{c}
\text{ext}\\
{\begin{subarray}{c}
q_{0}^{\nu}\in Q\\
\nu\in\{1,...,s-1\}
\end{subarray}}
\end{array}h\sum_{i=1}^{r}b_{i}L(q_{d}(h-c_{i}h;q_{0},h)),\dot{q}_{d}(h-c_{i}h;q_{0},h))\\
 & = & \begin{array}[t]{c}
\text{ext}\\
{\begin{subarray}{c}
q_{0}^{\nu}\in Q\\
\nu\in\{1,...,s-1\}
\end{subarray}}
\end{array}h\sum_{i=1}^{r}b_{r+1-i}L(q_{d}(c_{r+1-i}h;q_{0},h)),\dot{q}_{d}(c_{r+1-i}h;q_{0},h))\\
 & = & L_{d}(q_{0}^{0},q_{1}^{0}).
\end{eqnarray*}

\end{proof}

\begin{theorem}
The discrete Hamiltonian maps of $(PsNrQ2rGau)$ and $(PsNrQ2r-2Lob)$ are self-adjoint and thus time-reversible.
\end{theorem}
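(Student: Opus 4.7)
The plan is to reduce the statement directly to the two theorems just established: use Theorem~\ref{th:lagadj} to show that the discrete Lagrangians of $(PsNrQ2rGau)$ and $(PsNrQ2r-2Lob)$ are self-adjoint, and then invoke Theorem~\ref{th:adjointflow} to transfer self-adjointness to the discrete Hamiltonian map, which by the definition given in Section~\ref{subsec:timerevers} is exactly time reversibility.

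To apply Theorem~\ref{th:lagadj} I need to verify its two hypotheses for both methods: symmetry of the quadrature formula $(b_i,c_i)_{i=1}^r$ and symmetry of the control points $(d_i)_{i=0}^s$. For the quadrature rules, I would appeal to the classical fact that the Gauss--Legendre nodes and weights on $[0,1]$ satisfy $c_i + c_{r+1-i} = 1$ and $b_i = b_{r+1-i}$ because the Legendre polynomials shifted to $[0,1]$ are symmetric about $1/2$, and likewise for the Lobatto nodes (which include the endpoints $c_1=0$, $c_r=1$ and are otherwise the zeros of a symmetric polynomial, with symmetric weights). This is a standard property and I would cite it to \cite{HaLuWa} rather than re-deriving it.

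For the control points, the notation $(PsNrQu)$ does not fix the $d_i$, so strictly the statement should be read with symmetrically distributed control points. I would therefore state at the start of the proof that we choose $(d_i)_{i=0}^s$ symmetrically about $1/2$ (for instance, equidistant nodes or the Lobatto nodes themselves), so that $d_i = 1-d_{s-i}$ for all $i$. With these two symmetries in hand, Theorem~\ref{th:lagadj} immediately yields that the discrete Lagrangian of the method is self-adjoint in the sense $L_d(q_0^0,q_1^0,h) = -L_d(q_1^0,q_0^0,-h)$.

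Finally, Theorem~\ref{th:adjointflow} says that $\tilde{F}_{L_d^*} = \tilde{F}_{L_d}^*$; self-adjointness of $L_d$ (possibly up to equivalence) thus gives $\tilde{F}_{L_d}^* = \tilde{F}_{L_d}$, which is precisely the definition of a self-adjoint, hence time-reversible, one-step method. I do not expect any genuine obstacle here: the entire work has been set up so that this theorem is essentially a corollary, and the only delicate point is to make explicit that the control points are chosen symmetrically (which is either part of the convention or trivially satisfied by all natural choices for the Gauss/Lobatto setting).
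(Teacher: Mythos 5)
Your proposal is correct and follows exactly the paper's route: verify the symmetry hypotheses of Theorem~\ref{th:lagadj} for the Gauss and Lobatto rules and the (symmetrically chosen) control points, conclude self-adjointness of $L_d$, and then invoke Theorem~\ref{th:adjointflow}. Your explicit remark that the symmetry of the control points must be part of the convention matches the paper's own phrasing (``since the control points of $q_d$ are chosen symmetrically''), so there is no substantive difference.
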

\begin{proof}
Since the Gauss and the Lobatto quadrature are symmetric and since the control points of $q_d$ are chosen symmetrically, it follows with Theorem~\ref{th:lagadj} that the discrete Lagrangian of $(PsNrQ2rGau)$ and $(PsNrQ2r-2Lob)$ are self-adjoint. The statement follows with Theorem~\ref{th:adjointflow}. \end{proof}

\subsection{Linear stability analysis}\label{subsec:linstab}

In the following, we investigate the stability properties of the constructed variational integrators. We restrict ourselves to a linear stability analysis. Following \cite{Leimkuhler2004}, we consider the Lagrangian of a harmonic oscillator 
\begin{equation}\label{eq:Lag_ho}
L(q,\dot{q}) = \frac{1}{2} \dot{q}^2 - \frac{1}{2} \omega^2 q^2
\end{equation}
with $\omega, q \in \mathbb{R}$.
The Hamiltonian equations read
\begin{align*}
\dot{p}& = -\omega^2 q,\quad \dot{q}= p
\end{align*}
and the exact solution is given by
\begin{eqnarray*}
\left(\begin{array}{c}
p(t)\\
\omega q(t)
\end{array}\right) & = & \left(\begin{array}{cc}
\cos{(\omega t)} & -\sin{(\omega t)}\\
\sin{(\omega t)} & \cos{(\omega t)}
\end{array}\right)\left(\begin{array}{c}
p(0)\\
\omega q(0)
\end{array}\right)= M_\omega \left(\begin{array}{c}
p(0)\\
\omega q(0)
\end{array}\right) 
\end{eqnarray*}
with $\text{det}(M_\omega)=1$. The eigenvalues of $M_\omega$ are $\lambda_{1,2}=e^{\pm iwt}$ and thus, we have that $|\lambda_{1,2}|=1.$

By applying a variational integrator $(PsNrQu)$ to the Lagrangian~\eqref{eq:Lag_ho}, the discrete Euler-Lagrange equations form a linear system of equations such that the discrete Hamiltonian map $\tilde{F}_{L_d}$ of $(PsNrQu)$ can be written as linear map
\begin{eqnarray*}
\left(\begin{array}{c}
p_{1}^{0}\\
wq_{1}^{0}
\end{array}\right) & = & M_{h,w}\left(\begin{array}{c}
p_{0}^{0}\\
\omega q_{0}^{0}
\end{array}\right)
\end{eqnarray*}
with matrix $M_{h,\omega}$.

Following \cite{Leimkuhler2004}, we call a numerical solution \emph{asymptotically stable} if the growth of the solution is asymptotically bounded. A sufficient condition for asymptotic stability is that the eigenvalues of $M_{h,\omega}$ are in the unit desk of the complex plane  and are simple if they lie on the unit circle.
We investigate this property for selected variational integrators.

\begin{enumerate}
\item For the midpoint rule $(P1N1Q2Gau)$ we have
\[
M_{h,\omega}=\left(\begin{array}{cc}
-\frac{h^{2} \omega^{2}-4}{h^{2} \omega^{2}+4} & -\frac{4 h \omega}{h^{2} \omega^{2}+4}\\
\frac{4 h \omega}{h^{2} \omega^{2}+4} & -\frac{h^{2} \omega^{2}-4}{h^{2} \omega^{2}+4}
\end{array}\right).
\]
Since $M_{h,\omega}M_{h,\omega}^T = \text{Id}$, $M_{h,\omega}$ is orthogonal with  $|\lambda(M_{h,\omega})| = 1$. Thus, the midpoint rule is asymptotically stable for all $h,\omega \in \mathbb{R}$.
\item The St\"ormer-Verlet method $(P1N2Q2Lob)$ has the iteration matrix
\[
M_{h,\omega}=\left(\begin{array}{cc}
1-\frac{h^{2}\omega^{2}}{2} & \frac{h^{3} \omega^{3}}{4}-h \omega\\
h \omega & 1-\frac{h^{2} \omega^{2}}{2}
\end{array}\right).
\]
with eigenvalues
\[
\lambda_{1,2}=1-\frac{h^{2} \omega^{2}}{2}\pm\frac{h \omega\sqrt{h^{2} \omega^{2}-4}}{2},
\]
thus, the method is stable for $(h\omega)^{2}<4$ (cf.~\cite{Leimkuhler2004}).
\item For the $(P2N2Q4Gau)$ method we have
\[
M_{h,\omega}=\left(\begin{array}{cc}
\frac{h^{4}  \omega^{4}-60  h^{2}  \omega^{2}+144}{h^{4}  \omega^{4}+12  h^{2}  \omega^{2}+144} & -\frac{12  h  \omega \left(h^{2}  \omega^{2}-12\right)}{h^{4}  \omega^{4}+12  h^{2}  \omega^{2}+144}\\
\frac{12  h  \omega \left(h^{2}  \omega^{2}-12\right)}{h^{4}  \omega^{4}+12  h^{2}  \omega^{2}+144} & \frac{h^{4}  \omega^{4}-60  h^{2}  \omega^{2}+144}{h^{4}  \omega^{4}+12  h^{2}  \omega^{2}+144}
\end{array}\right),
\]
and the method is stable for all $q,\omega\in\mathbb{R}$ since $M_{h,\omega}$ is orthogonal.
\item The $(P2N3Q4Lob)$ scheme results in the iteration matrix
\[
M_{h,\omega}=\left(\begin{array}{cc}
\frac{h^{4}  \omega^{4}-22  h^{2}  \omega^{2}+48}{2  h^{2}  \omega^{2}+48} & \frac{24  h  \omega-3  h^{3}  \omega^{3}}{h^{2}  \omega^{2}+24}\\
-\frac{h  \omega \left(h^{4}  \omega^{4}-36  h^{2}  \omega^{2}+288\right)}{12  h^{2}  \omega^{2}+288} & \frac{h^{4}  \omega^{4}-22  h^{2}  \omega^{2}+48}{2  h^{2}  \omega^{2}+48}
\end{array}\right)
\]
with eigenvalues
\[
\lambda_{1,2}=\frac{h^{4} \omega^{4}-22 h^{2} \omega^{2}+48\pm h \omega\sqrt{h^{6} \omega^{6}-44\, h^{4} \omega^{4}+576 h^{2} \omega^{2}-2304}}{2 h^{2}\, \omega^{2}+48}.
\]
The stability region is shown in Fig.~\ref{fig:stab_P2N3Lob}. The integrator is stable for $| h\omega | < 2 \sqrt{2}$.
\begin{figure}
\centering 
\def\svgwidth{8cm} 
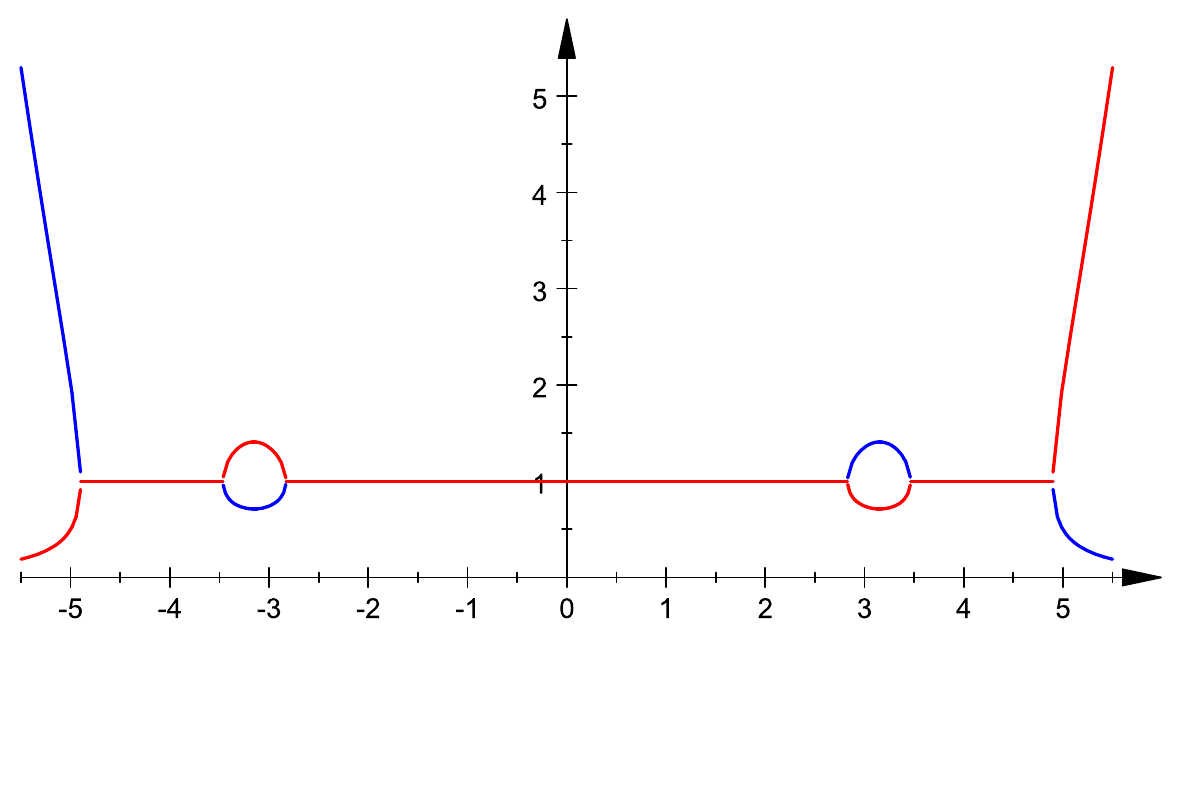
 \caption{Modulus of the eigenvalues of $M_{h,\omega}$ for the $(P2N3Q4Lob)$ integrator. The method is stable for $| h\omega | < 2 \sqrt{2}$. }
 \label{fig:stab_P2N3Lob}
\end{figure}
\item The iteration matrix for $(P3N3Q6Gau)$ reads
\[
M_{h,\omega}=\left(\begin{array}{cc}
-\frac{h^{6}  \omega^{6}-264  h^{4}  \omega^{4}+6480  h^{2}  \omega^{2}-14400}{h^{6}  \omega^{6}+24  h^{4}  \omega^{4}+720  h^{2}  \omega^{2}+14400} & \frac{24  h  \omega \left(h^{4}  \omega^{4}-70  h^{2}  \omega^{2}+600\right)}{h^{6}  \omega^{6}+24  h^{4}  \omega^{4}+720  h^{2}  \omega^{2}+14400}\\
-\frac{24  h  \omega \left(h^{4}  \omega^{4}-70  h^{2}  \omega^{2}+600\right)}{h^{6}  \omega^{6}+24  h^{4}  \omega^{4}+720  h^{2}  \omega^{2}+14400} & -\frac{h^{6}  \omega^{6}-264  h^{4}  \omega^{4}+6480  h^{2}  \omega^{2}-14400}{h^{6}  \omega^{6}+24  h^{4}  \omega^{4}+720  h^{2}  \omega^{2}+14400}
\end{array}\right).
\]
The scheme is again stable for all $q,\omega\in\mathbb{R}$ due to the orthogonality of $M_{h,\omega}$.
\item The integrator $(P3N4Q6Lob)$ gives
\[
M_{h,\omega}=\left(\begin{array}{cc}
-\frac{\frac{h^{6}  \omega^{6}}{2}-46  h^{4}  \omega^{4}+840  h^{2}  \omega^{2}-1800}{h^{4}  \omega^{4}+60  h^{2}  \omega^{2}+1800} & \frac{6  h  \omega \left(h^{4}  \omega^{4}-40  h^{2}  \omega^{2}+300\right)}{h^{4}  \omega^{4}+60  h^{2}  \omega^{2}+1800}\\
\frac{h  \omega \left(h^{6}  \omega^{6}-144  h^{4}  \omega^{4}+5760  h^{2}  \omega^{2}-43200\right)}{24 \left(h^{4}  \omega^{4}+60  h^{2}  \omega^{2}+1800\right)} & -\frac{h^{6}  \omega^{6}-92  h^{4}  \omega^{4}+1680  h^{2}  \omega^{2}-3600}{2  h^{4}  \omega^{4}+120  h^{2}  w^{2}+3600}
\end{array}\right).
\]
The eigenvalues are given as
$$\lambda_{1,2}=\frac{3600-1680x^{2}+92x^{4}-x^{6}\pm x\sqrt{x^{10}-184x^{8}+11820x^{6}-316800x^{4}+3456000x^{2}-12960000}}{2x^{4}+120x^{2}+3600}$$
with $x:=h\omega$.
In Figs~\ref{fig:stab_P3N4Lob} and \ref{fig:stab_P3N4Lob_zoom} (zoom of Fig.~\ref{fig:stab_P3N4Lob}) the stability region is shown.
\begin{figure}
\centering 
\def\svgwidth{8cm} 
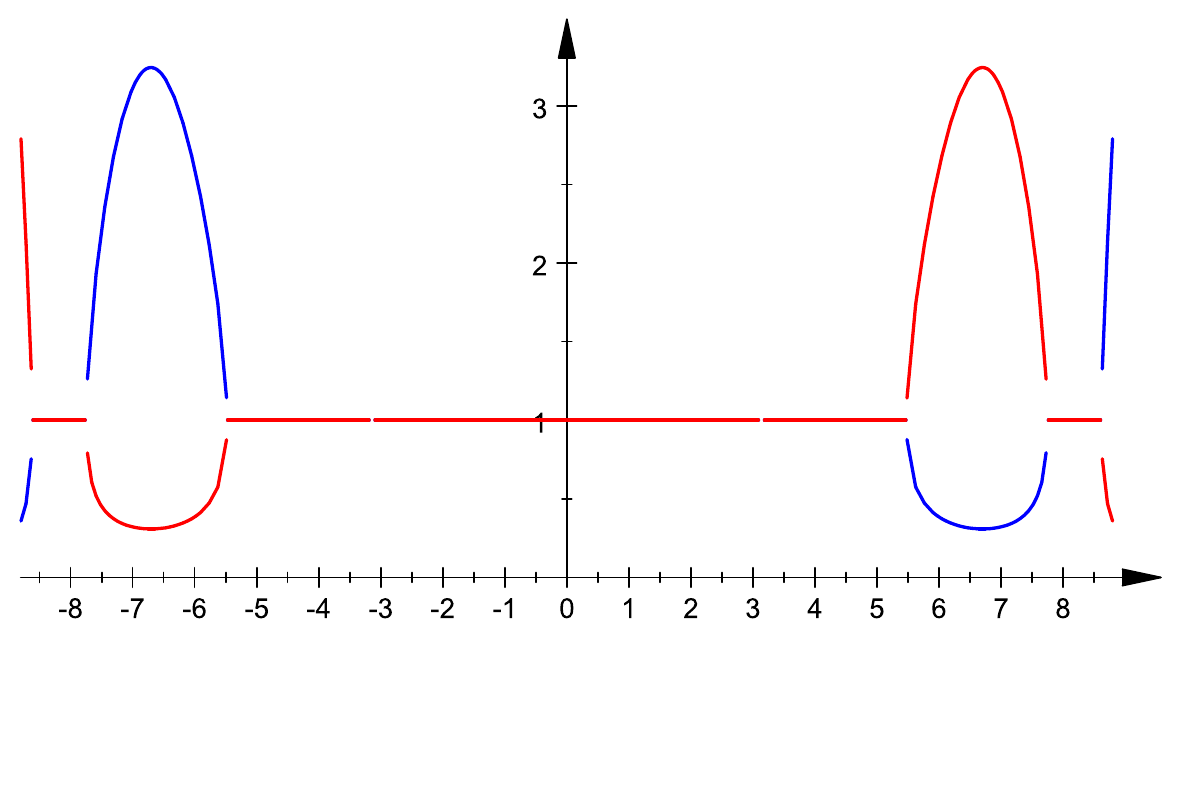
\caption{Modulus of the eigenvalues of $M_{h,\omega}$ for the $(P3N4Q6Lob)$ integrator.}
 \label{fig:stab_P3N4Lob}
\end{figure}
\begin{figure}
\centering 
\def\svgwidth{8cm} 
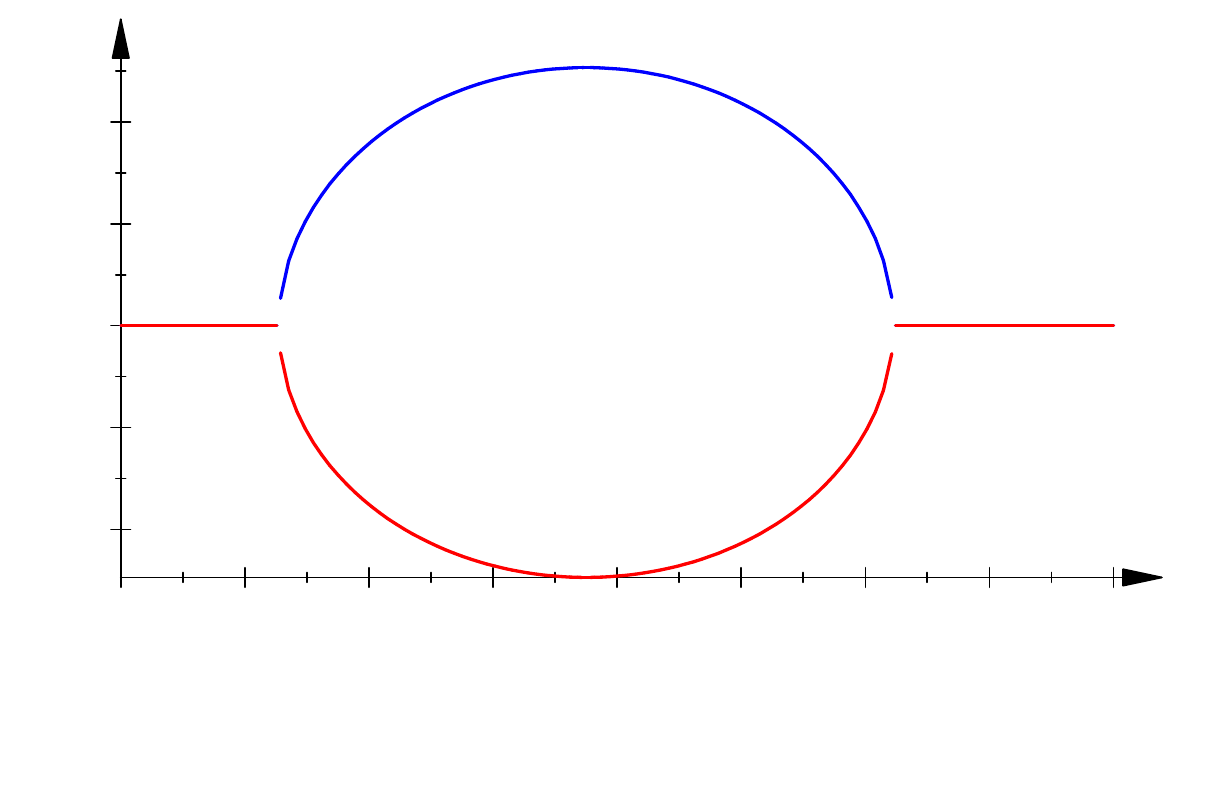
\caption{Modulus of the eigenvalues of $M_{h,\omega}$ for the $(P3N4Q6Lob)$ integrator (zoom).}
\label{fig:stab_P3N4Lob_zoom}
\end{figure}
\end{enumerate}
An interesting observation is that for the integrators $(PsNsQ2sGau)$, $s=\{1,2,3\}$ the iteration matrices $M_{h,\omega}$ are orthogonal independent of the step size $h$ and thus asymptotically stable.
Indeed, we can show that this unrestricted stability property holds for general integrators of this type.
\begin{lemma}[A-stability of $(PsNsQ2sGau)$]
The variational integrator $(PsNsQ2sGau)$ is A-stable.
\end{lemma}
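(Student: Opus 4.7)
The plan is to reduce the claim to the classical fact that the $s$-stage Gauss--Legendre collocation Runge--Kutta method preserves every quadratic first integral of a Hamiltonian system. By Remark~\ref{rem:Gal}, $(PsNsQ2sGau)$ is equivalent to precisely this collocation method, so any property of its discrete flow transfers directly to the discrete Hamiltonian map $\tilde{F}_{L_d}$ of $(PsNsQ2sGau)$ applied to the oscillator~\eqref{eq:Lag_ho}.

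First I would note that the Hamiltonian $H(q,p)=\tfrac{1}{2}(p^{2}+\omega^{2}q^{2})$ is a quadratic first integral of the exact flow. The classical theorem of Cooper (see~\cite{HaLuWa}) asserts that Gauss--Legendre collocation schemes preserve quadratic first integrals exactly; consequently $H(q_{k}^{0},p_{k}^{0})=H(q_{0}^{0},p_{0}^{0})$ for all $k\ge 0$. Written in the rescaled coordinates $(p,\omega q)$ employed in Section~\ref{subsec:linstab}, this is exactly the statement $M_{h,\omega}^{T}M_{h,\omega}=\mathrm{Id}$, i.e., $M_{h,\omega}$ is orthogonal for every $h,\omega\in\mathbb{R}$.

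Next I would extract the spectral information. Since $(PsNsQ2sGau)$ is symplectic (Section~\ref{subsec:prop}), one has $\det M_{h,\omega}=+1$, so $M_{h,\omega}\in\mathrm{SO}(2)$ is a planar rotation whose eigenvalues $e^{\pm i\phi(h,\omega)}$ lie on the unit circle. They are simple away from the isolated phase angles $\phi\in\{0,\pi\}$, and at those exceptional points $M_{h,\omega}=\pm\mathrm{Id}$, so the iterates are trivially bounded. By the asymptotic stability criterion recalled in Section~\ref{subsec:linstab}, the numerical solution grows at most boundedly for any step size $h$, which is the A-stability claim.

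The main obstacle I expect is justifying the invocation of Cooper's theorem in the present formalism: one must confirm that the preservation of quadratic invariants by the Gauss--Legendre Runge--Kutta method transfers through the equivalence in Remark~\ref{rem:Gal} to the discrete Hamiltonian flow $\tilde{F}_{L_d}$ of $(PsNsQ2sGau)$. Since that equivalence is an identification of the two flow maps on $T^{*}Q$, rather than merely of their orbits, this transfer is automatic, and the remainder of the argument is a routine two-dimensional spectral computation.
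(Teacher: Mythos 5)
Your argument is correct, and it shares its first step with the paper's proof --- both reduce to the equivalence of $(PsNsQ2sGau)$ with the $s$-stage Gauss--Legendre collocation method stated in Remark~\ref{rem:Gal} --- but from there the two routes diverge. The paper simply cites the classical A-stability of Gauss collocation schemes (\cite{HaWa}, Theorem 5.2) and stops. You instead invoke Cooper's theorem on the exact conservation of quadratic first integrals by Gauss--Legendre methods, apply it to the quadratic Hamiltonian of the test problem~\eqref{eq:Lag_ho}, and conclude that the iteration matrix $M_{h,\omega}$ is orthogonal in the $(p,\omega q)$ coordinates for every $h$ and $\omega$; stability then follows from a short spectral argument, with the degenerate case $M_{h,\omega}=\pm\mathrm{Id}$ correctly dismissed because orthogonality alone bounds all iterates. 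Your route proves something strictly stronger and more structural than the bare stability claim: it explains \emph{why} the matrices computed explicitly for $s=1,2,3$ in Section~\ref{subsec:linstab} turn out to be orthogonal, i.e., that the discrete energy of the harmonic oscillator is conserved exactly, not merely bounded. What it buys less of is generality: the citation of A-stability covers the full left half-plane test equation $\dot y=\lambda y$, $\operatorname{Re}\lambda\le 0$, whereas your invariant-preservation argument is tailored to the purely oscillatory case --- which is, however, exactly the setting of the paper's linear stability analysis, so the lemma as used is fully established. One small point worth making explicit if you write this up: the transfer of Cooper's theorem through the equivalence of Remark~\ref{rem:Gal} requires that the identification be of the discrete Hamiltonian maps on $T^*Q$ (not merely of trajectories), which is indeed what the equivalence with partitioned symplectic Runge--Kutta methods in \cite{MaWe01} provides.
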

\begin{proof}
In \cite{MaWe01} it is shown that the integrator $(PsNsQ2sGau)$ is equivalent to the collocation Gauss-Legendre rule (see also Remark~\ref{rem:Gal}) which is A-stable (see \cite{HaWa}, Theorem 5.2). 
\end{proof}

\section{Numerical convergence analysis}\label{sec:numex}

In this section we numerically analyze the convergence order of the constructed variational integrators $(PsNrQu)$ for $s,r,u\in\mathbb{N}$ and the theoretical results on the preservation properties are evaluated. 
To this end, we consider two examples, the harmonic oscillator and the Kepler problem and we want to numerically determine the maximal order of the variational integrator $(PsNrQu)$.

In Section~\ref{sub:discrteHam} we assume that the discrete Lagrangian is regular to obtain a well-defined discrete Lagrangian flow.
In \cite{HaLe13} it is shown that the well-posedness depends on the order of the quadrature rule that is used to approximate the action integral.
In particular, it is shown that for a Lagrangian of the form $L= \frac{1}{2} \dot{q}^T M \dot{q}-V(q)$ with $M$ symmetric positive-definite and $\nabla V$ Lipschitz continuous a unique solution of the internal stage equations \eqref{eq:del_2} exists if the used quadrature rule is of order at least $2s-1$ (with $s$ the degree of the interpolation polynomial), i.e.~$u\ge 2s-1$.
For the Gauss and Lobatto quadrature this means that we have to choose $r\ge s$ and $r\ge s+1$, respectively, since this yields quadrature rules of order $2s$.
Note that the variational integrator $(PsNsQ2s-2Lob)$ with $r=s$ which yields the standard Lobatto IIIA-IIIB partitioned Runge-Kutta method does not satisfy the condition $u\ge 2s-1$, however, it is only a sufficient not a necessary condition for uniqueness of solutions.
Thus, to ensure a well-defined discrete Lagrangian flow, we restrict to variational integrators with quadrature rule of Gauss or Lobatto type and for which the polynomial degree is smaller or equal to $r$ ($s\le r$).
%
%
The implementation is performed as described in Remark~\ref{rem:implem}.

\subsection{Harmonic oscillator}\label{subsec:harmosc}

Consider the two-dimensional harmonic oscillator with mass being equal to one and the Lagrangian $L(q,\dot{q}) = \frac{1}{2} \dot{q}^T \dot{q} - \frac{1}{2} q^T q$ with $q,\dot{q}\in\mathbb{R}^2$. The Euler-Lagrange equations are
\[
\ddot{q}(t)=-q(t).
\]
By the Legendre transform we obtain the Hamiltonian system
\begin{equation}\label{Ham:harmos}
\dot{q}(t) = p(t), \quad \dot{p}(t) = -q(t).
\end{equation}
The total energy of the system is given by the Hamiltonian $H(q,p) = \frac{1}{2} p^T p + \frac{1}{2} q^T q$ with $q,p\in\mathbb{R}^2$.

\subsubsection{Numerical convergence order}

Let $(q^e,p^e)(t)$ denote the exact solution consisting of configuration and momentum of the Hamiltonian system~\eqref{Ham:harmos}.
For the error calculations we use the global error determined by
\begin{equation}
\max_{\begin{subarray}{c}
k\in\{0,...,N\}\\
i\in\{1,2\}
\end{subarray}}|q_{k,i}^{0}-q_{i}^{e}(hk)|,\quad\quad
\max_{\begin{subarray}{c}
k\in\{0,...,N\}\\
i\in\{1,2\}
\end{subarray}}|p_{k,i}^{0}-p_{i}^{e}(hk)|
\end{equation}
with step size $h$ and the index $i$ denotes the components of the configuration and momenta, respectively.
$(q_k^0,p_k^0)_{k=0}^N$ is the discrete solution computed by a variational integrator with $q_N^0 = q_{N-1}^s$ and $p_N^0 = p_{N-1}^s$. 
In Figs~\ref{fig:Gaukonvplot} and \ref{fig:Lobkonvplot} the
global error for the different variational integrators $(PsNrQu)$ is shown
in dependence of the step size $h$.
The numerically determined order is given in Figs~\ref{fig:Ordnung-der-variatinellen}
and \ref{fig:Ordnung-der-variatinellen-1} and summarized in the
Tables~\ref{tab:Gau} and \ref{tab:Lob}\footnote{Note that in Table~\ref{tab:Lob} the numerical results for $r=2$ are not shown in Fig.~\ref{fig:Lobkonvplot} and \ref{fig:Ordnung-der-variatinellen-1}.}.
Note that the values for $r=s$ in both tables correspond to the convergence orders of the collocation Gauss-Legendre rule and the Lobatto IIIA-IIIB partitioned Runge-Kutta method, respectively. 

The first observation by considering the values in the Tables~\ref{tab:Gau} and \ref{tab:Lob} is that the convergence order for all combinations of polynomial degree $s$ and number $r$ of quadrature points can be determined as $\min{(2s,u)}$ with $u=2r$ and $u=2r-2$ being the order of the Gauss and Lobatto quadrature rule, respectively.
In particular, for the Galerkin variational integrators based on the Gauss quadrature formula (Table~\ref{tab:Gau}), for a given polynomial degree $s$ the order of convergence can not be improved for any quadrature rule based on more quadrature points than $s$. Thus, 
a reasonable combination of polynomial degree and number of quadrature points is $r\ge s$. 
However, for the Galerkin variational integrators based on the Lobatto quadrature formula (Table~\ref{tab:Lob}), the order of convergence can be improved if the number of quadrature points is increased by at least one, i.e.~the best combinations satisfy $r\ge s+1$.
This demonstrates that the order of $(PsNsQ2s-2Lob)$ increases to $2s$ if a Lobatto quadrature with $s+1$ quadrature points is used, or, the order does not decrease if a polynomial of degree $s-1$ instead of $s$ is used.

Based on this numerical results we can conclude that for a variational integrator based on an approximation space of degree $s$ polynomials the convergence order $2s$ is possible, i.e.~the integrator is superconvergent. This maximal order is reduced if the quadrature rules used for the approximation of the action is not accurate enough. Thus, to guarantee the maximal convergence order we have to choose $u\ge 2s$.

\begin{figure}[h]
\centering
\includegraphics[width=\textwidth]{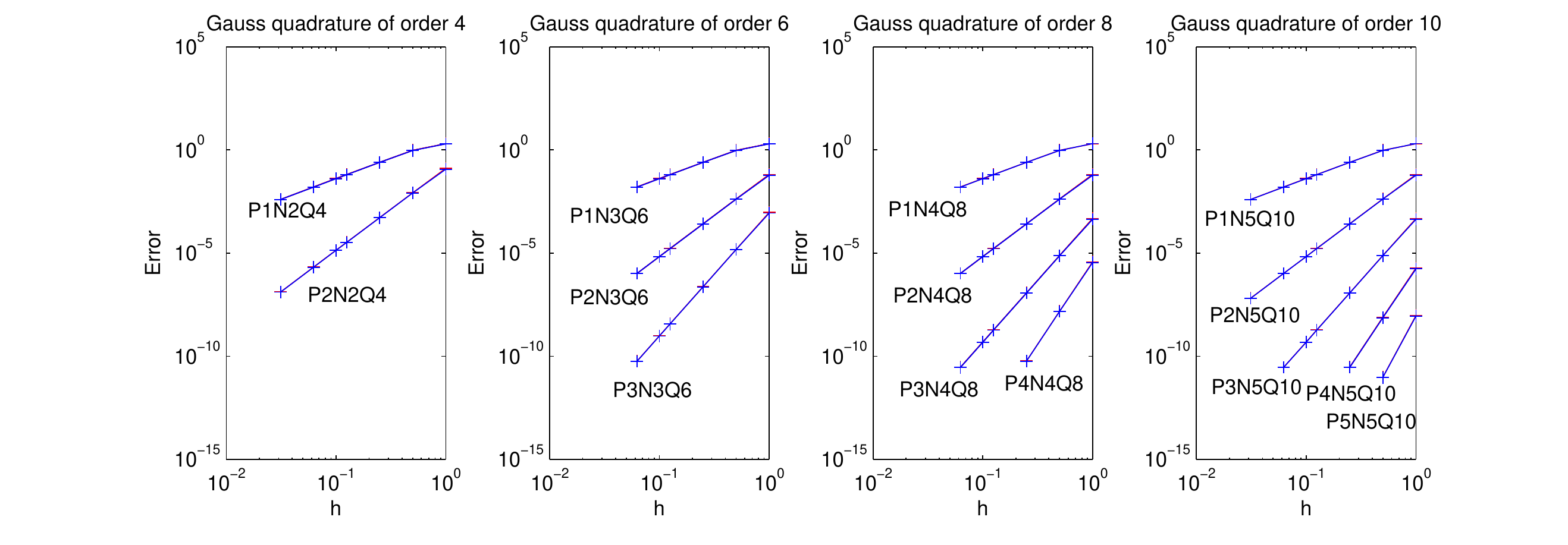}
\caption{\label{fig:Gaukonvplot}{Harmonic oscillator: Log-log plot
 of the error for the configurations $q$ and the momenta $p$
(superimposed) by step size $h$; with the use of variational integrators
$(PsNrQ2rGau)$; divided into four subplots, separated by applied
Gauss quadrature.}}
\end{figure}
\begin{figure}[h]
\centering
\includegraphics[width=\textwidth]{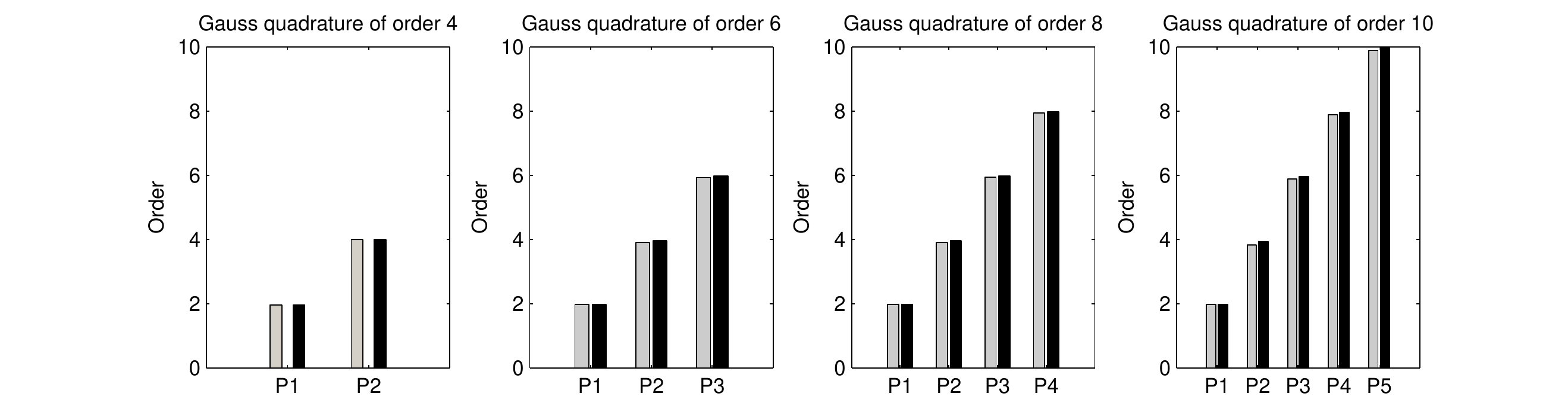}
\caption{\label{fig:Ordnung-der-variatinellen}{Harmonic oscillator:
Order of the variational integrators $\left(PsNrQ2rGau\right)$ with
respect to the configurations $q$ (left bar) and the momenta $p$
(right bar). The title gives the order of the applied Gauss quadrature
which is $2r$. The $x$-axis indicates the degree of the used polynomial.}}
\end{figure}
\begin{figure}[h]
\centering
\includegraphics[width=\textwidth]{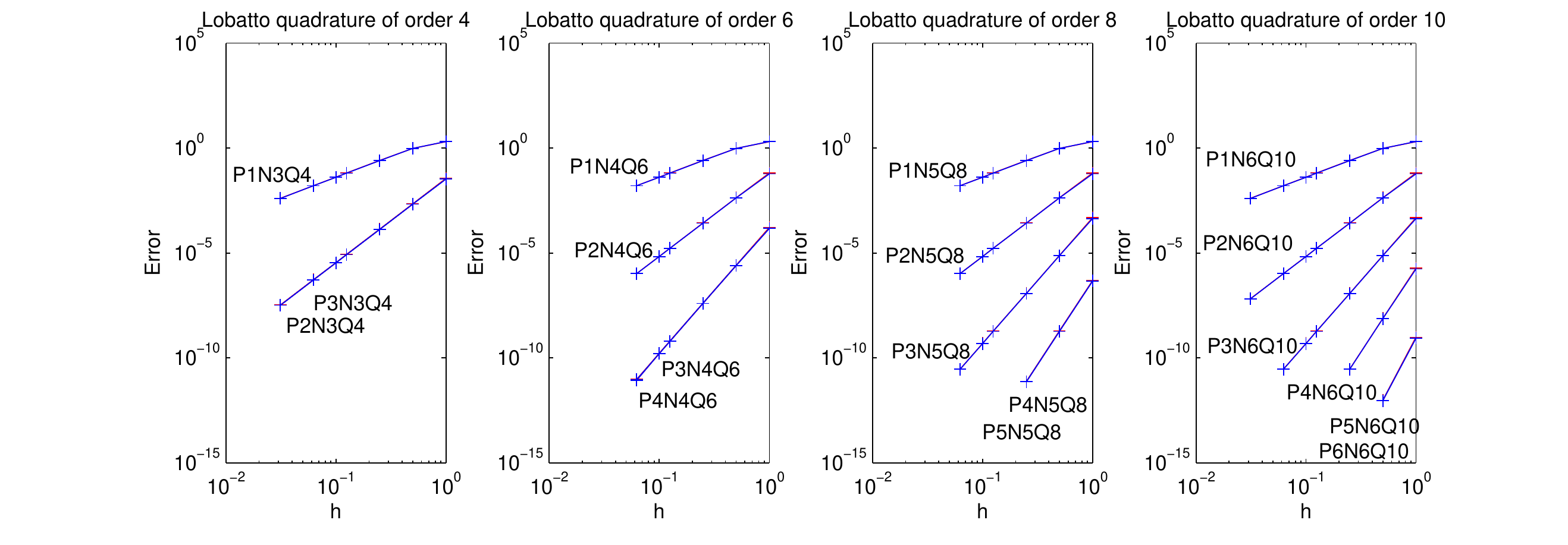}
\caption{\label{fig:Lobkonvplot}{Harmonic oscillator: Log-log plot of the error for the configurations $q$ and the momenta $p$
(superimposed) by step size $h$; with the use of variational integrators
$(PsNrQ2r-2Lob)$; divided into four subplots, separated by applied
Lobatto quadrature. Note that $(Ps-1NsQ2s-2Lob)$ and $(PsNsQ2s-2Lob)$ are of
the same order.}}
\end{figure}
\begin{figure}[h]
\centering
\includegraphics[width=\textwidth]{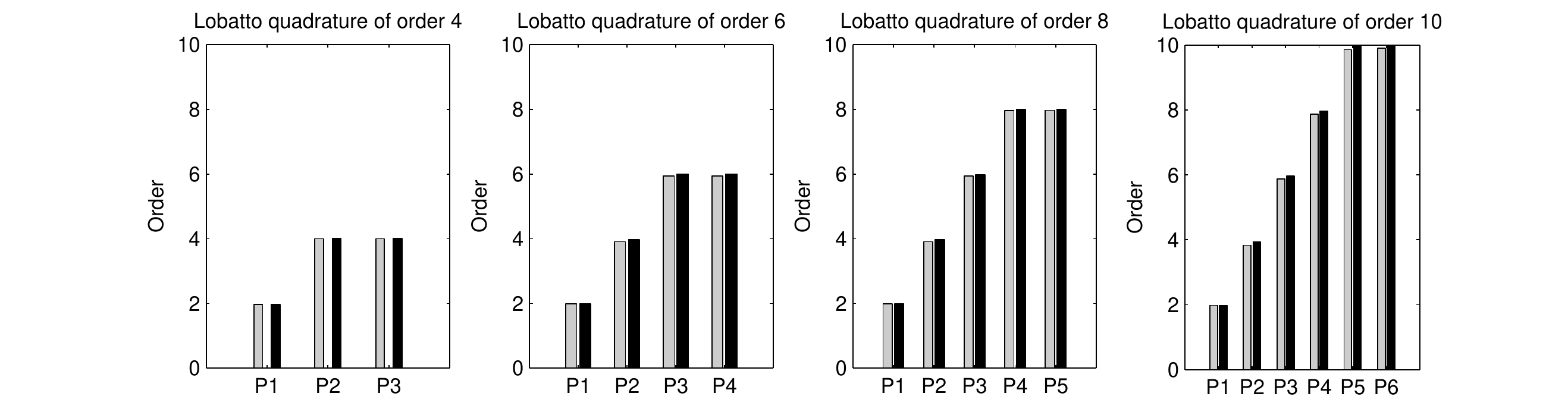}
\caption{\label{fig:Ordnung-der-variatinellen-1}{Harmonic oscillator:
Order of the variational integrators $\left(PsNrQ2r-2Lob\right)$ with
respect to the configurations $q$ (left bar) and the momenta $p$
(right bar). The title gives the order of the applied Lobatto quadrature
which is $2r-2$. The $x$-axis indicates the degree of the used polynomial.}}
\end{figure}

\begin{table}
\centering
\caption{Numerical convergence order of $\left(PsNrQ2rGau\right)$}
{\begin{tabular}{c|ccccc}
\hline
$r\backslash s$ & $s=1$ & $s=2$ & $s=3$ & $s=4$ & $s=5$ \tabularnewline
\hline 
$r=2$ & $2$& $4$ &  &  &   \tabularnewline
$r=3$ & $2$&$4$ & $6$ &  &   \tabularnewline
$r=4$ & $2$&$4$ & $6$ & $8$ &   \tabularnewline
$r=5$ & $2$&$4$ & $6$ & $8$ & $10$  \tabularnewline
\end{tabular}}
\label{tab:Gau}
\end{table}

\begin{table}
\centering
\caption{Numerical convergence order of $(PsNrQ2r-2Lob)$}
{\begin{tabular}{c|cccccc}
\hline
$r\backslash s$ & $s=1$ & $s=2$ & $s=3$ & $s=4$ & $s=5$ & $s=6$\tabularnewline
\hline 
$r=2$ &$2$& $2$ &  &  &  & \tabularnewline
$r=3$ & $2$&$4$ & $4$ &  &  & \tabularnewline
$r=4$ & $2$&$4$ & $6$ & $6$ &  & \tabularnewline
$r=5$ & $2$&$4$ & $6$ & $8$ & $8$ & \tabularnewline
$r=6$ & $2$ &$4$ & $6$ & $8$ & $10$ & $10$\tabularnewline
\end{tabular}}
\label{tab:Lob}
\end{table}


\subsubsection{Computational efficiency}
The numerical results also indicate which combination of quadrature rule and polynomial degree leads to lowest computational effort for a given order.
In Fig.~\ref{fig:Laufzeit_Fehler} the run-time compared with the
global error is given for the different integrators $(PsNrQ2r-2Lob)$ with
$s=\{1,...,6\}$ and $r=\{3,...,6\}$ and for different step sizes $h\in\{1,0.5,0.25,0.125,0.1,0.0625,0.03125\}$. The step size is
reduced until the error is below $10^{-10}$. 
The integrators $(P2N3Q4Lob)$,
$(P2N4Q6Lob)$, $(P2N5Q8Lob)$, and $(P2N6Q10Lob)$, which are of order four,
demonstrate that for an increasing number of nodes also the run-time increases.
The same behavior is also observable for the other integrators.

Further, a higher polynomial degree leads to an increasing number of Euler-Lagrange equations that have to be solved for and thus to higher computational effort.
In Fig.~\ref{fig:Laufzeit_Fehler}
it is shown that this leads to an increasing run-time (compare for example
$(P2N3Q4Lob)$ and $(P3N3Q4Lob)$ which are both of order four; or $(P5N6Q10Lob)$
and $(P6N6Q10Lob)$, which are of order ten).
Since the order of the integrator is $\min{(2s,u)}$, a reasonable choice for the polynomial degree is half of the order of the quadrature formula, i.e., $u=2s$. This guarantees a minimal number of discrete Euler-Lagrange equations without an order reduction of the variational integrator. For the Gauss and Lobatto quadrature rule the optimal combinations of polynomial degree $s$ and number $r$ of quadrature points is $r=s$ and $r=s+1$, respectively. That is, the integrators $(PsNsQ2sGau)$
and $(PsNs+1Q2sLob)$ are the most efficient integrators with view of run-time per
order.
%
%

For a clearer illustration we omit in Fig.~\ref{fig:Laufzeit_Fehle die besten}
the less efficient integrators which are displayed in Fig.~\ref{fig:Laufzeit_Fehler} and include the most efficient integrators
constructed with the Gauss and Lobatto quadrature. 
It can be read off which of these
integrators provide the desired accuracy with shortest
run-time. Larger $s$  means longer processing time but also higher
order. Notice that $(P1N1Q2Gau)$ and $(P1N2Q2Lob)$ are the midpoint
rule and the St\"ormer-Verlet method, respectively.

\subsubsection{Conservation of angular momentum and long-time energy behavior}

Positioning the system of the two-dimensional harmonic oscillator
in the $(x,y)$-plane of the three-dimensional space, the $z$-component
of the angular momentum 
\[
I(p,q)=-p_{1}q_{2}+p_{2}q_{1}
\]
is a conserved quantity. This follows from
Noether's theorem because the Lagrangian of the two-dimensional harmonic
oscillator $L=\frac{1}{2}\dot{q}^{T}\dot{q}-\frac{1}{2}q^{T}q$ is
invariant under the group of rotations $SO(2)=\{B\in\mathbb{R}^{2\times2}\,|\, B^{T}B=\text{Id},\,\,\det(B)=1\}$.
With the linearity of $R_{v} \in SO(2)$ and Remark~\ref{rem:lingroup} it follows that all variational integrators
$(PsNrQu)$ conserve the $z$-component of the angular momentum. In Fig.~\ref{fig:Drehimpuls-Fehler} it is shown that the $z$-component of the
angular momentum, if the variational integrators $(P2N3Q4Lob)$,
$(P3N4Q6Lob)$ and $(P4N5Q8Lob)$ are used, is preserved up to an error less than
$10^{-14}$.\footnote{Note that the accuracy is limited to machine precision and the accuracy of the applied Newton method to solve the discrete Euler-Lagrange equations.} 
In Fig.~\ref{fig:Drehimpuls-Fehler} (right)
the behavior of a Runge-Kutta method of order four is included which is not symplectic nor momentum-preserving.
Thus, Noether's theorem does not apply and the angular momentum is not preserved.
\begin{figure}[h]
\centering
\includegraphics[clip,width=0.5\textwidth]{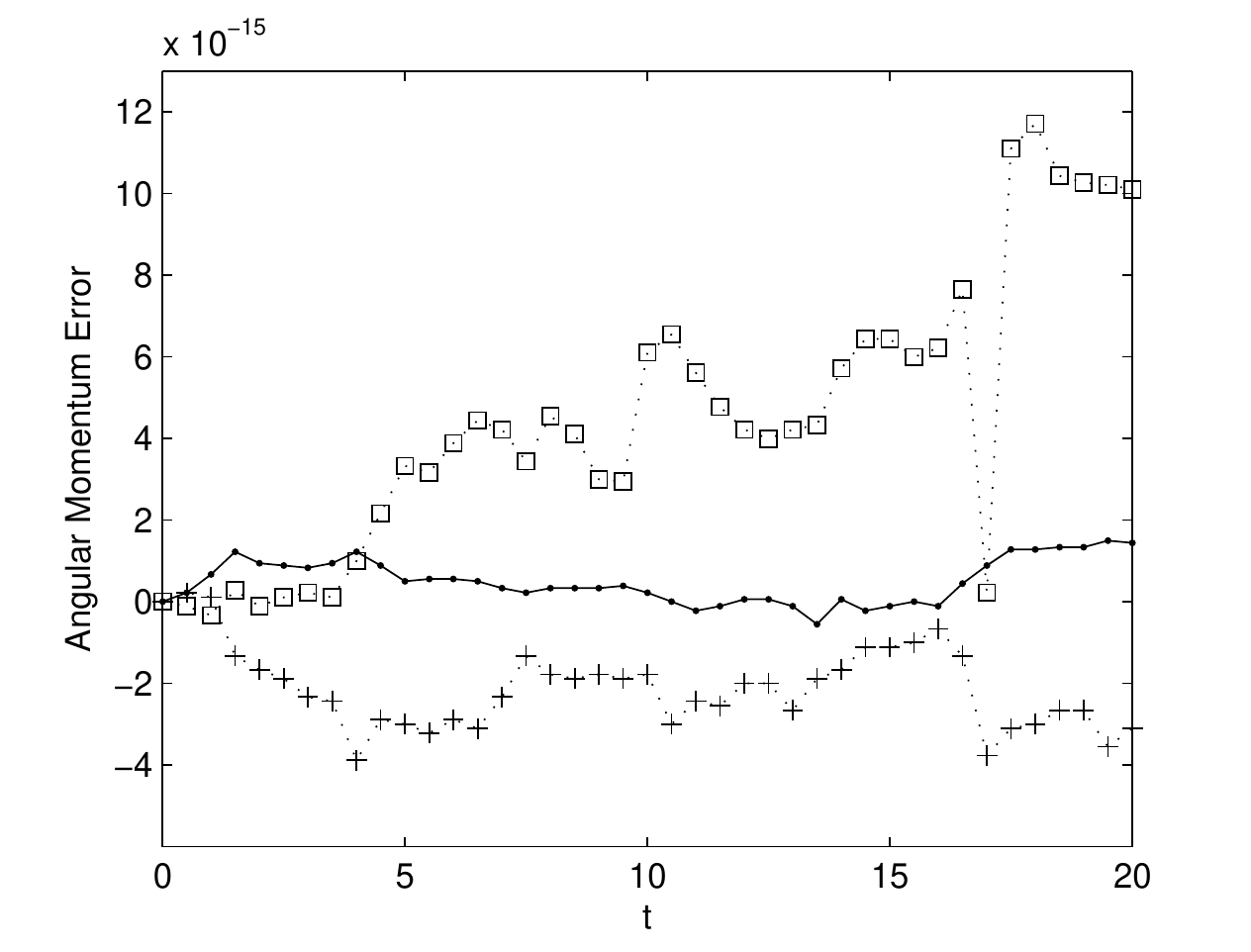}\includegraphics[clip,width=0.5\textwidth]{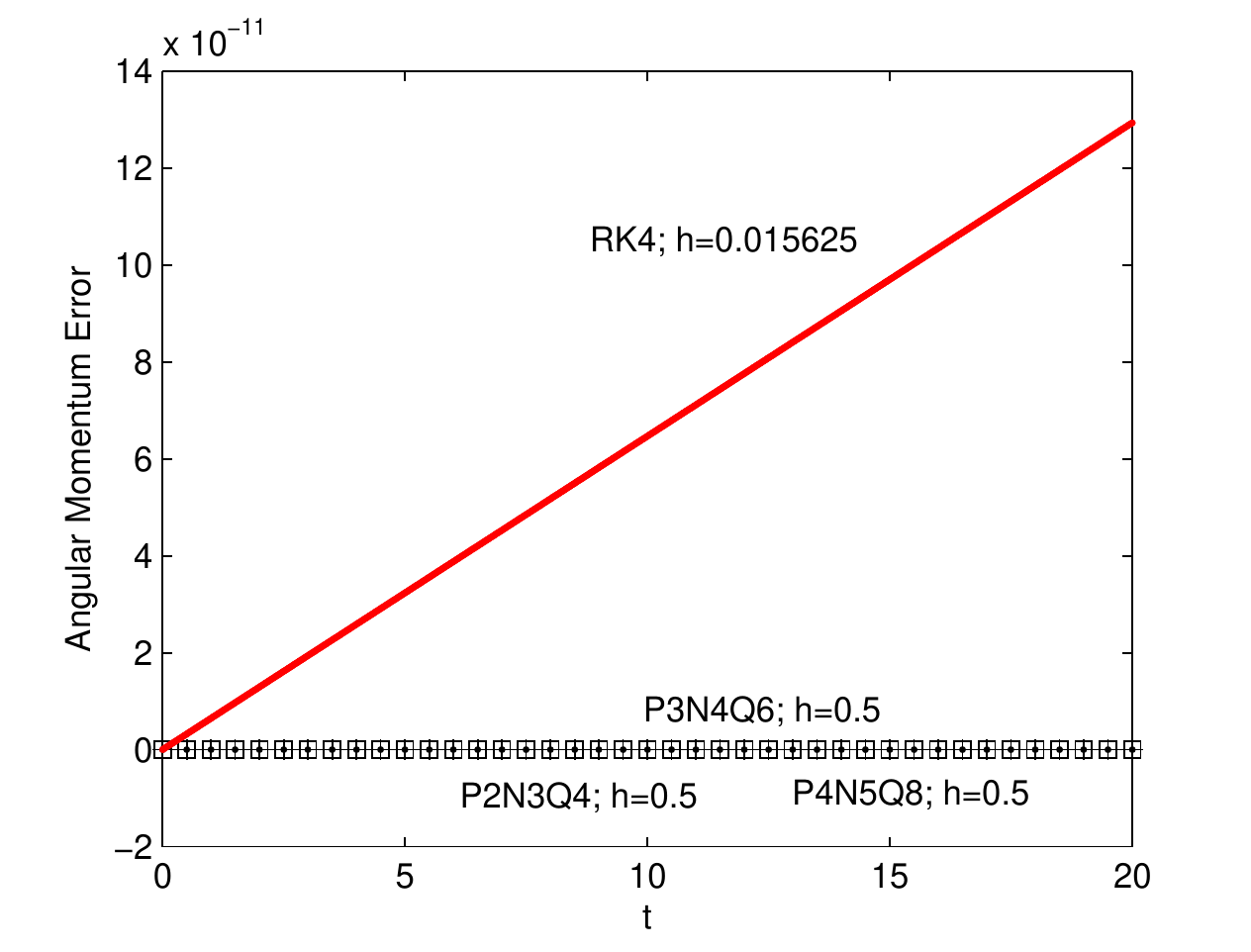}
\caption{Harmonic oscillator:
Left: Error of the $z$-component of the angular momentum for the variational
integrators $\left(P2N3Q4Lob\right)$ (dots), $\left(P3N4Q6Lob\right)$
(crosses) and $\left(P4N5Q8Lob\right)$ (squares) with step size $h=0.5$.
Right: Same plot as on the left with non-variational integrator included (Runge-Kutta
method of order four with step size $h=2^{-6}$ (red solid)) .}\label{fig:Drehimpuls-Fehler}
\end{figure}
In Section~\ref{subsec:prop} we introduced
the good long-time energy behavior of symplectic integrators,
in particular of variational integrators. In Fig.~\ref{fig:EnergieFehler} the error of the total energy of the harmonic oscillator simulated by different integrators is shown.
While the use of the variational integrators $\left(P3N4Q6Lob\right)$ and $\left(P4N5Q8Lob\right)$ leads to an oscillating but stable energy behavior, the use of a nonsymplectic Runge-Kutta method of order
four clearly shows an energy drift.
\begin{figure}[h]
\centering
\includegraphics[width=\textwidth]{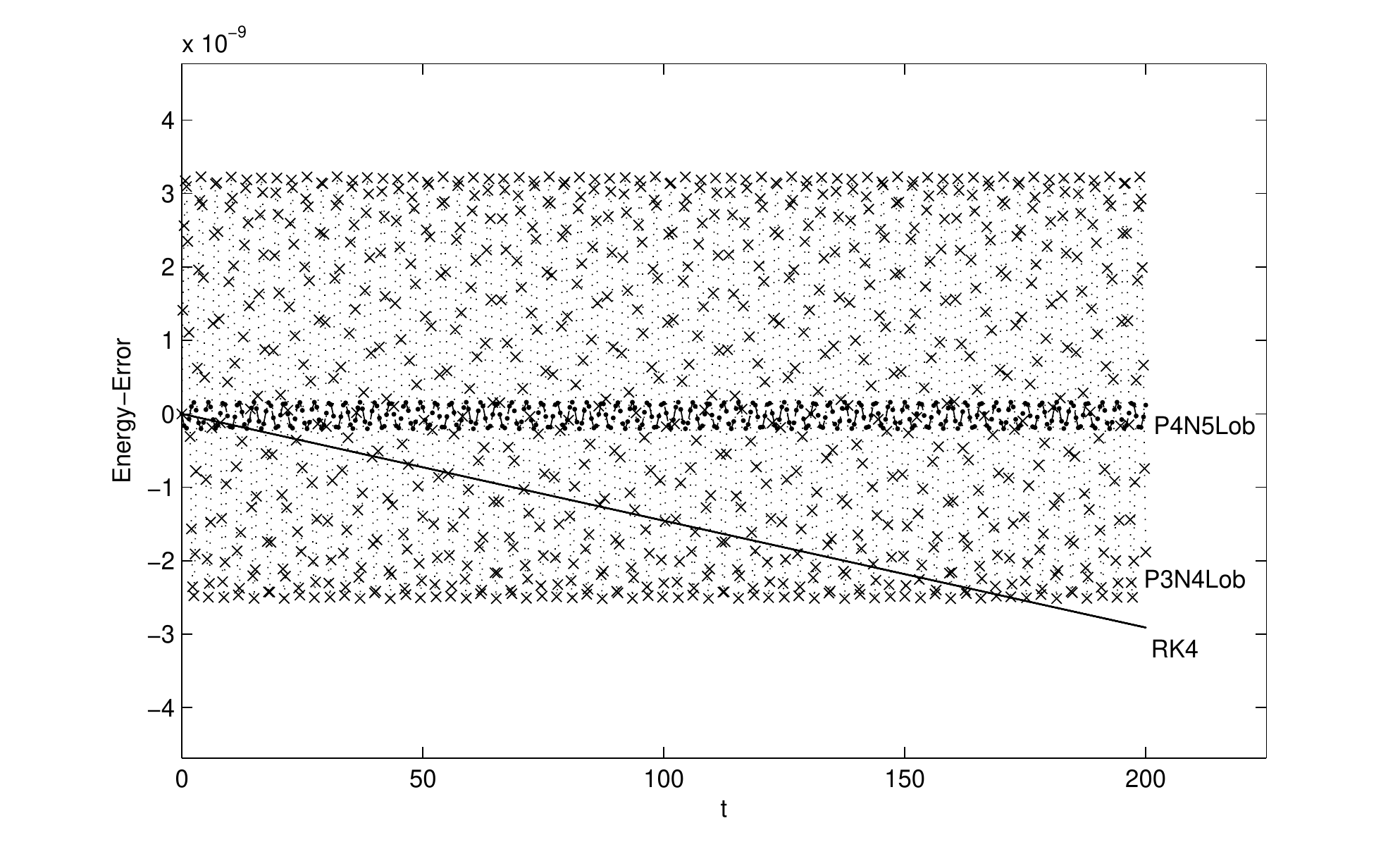}
\caption{Harmonic oscillator: Error
of the energy for the integrators $\left(P3N4Q6Lob\right)$ with step
size $h=0.25$ (crosses), $\left(P4N5Q8Lob\right)$ with step size $h=0.5$
(dots) and for a Runge-Kutta method of order four with step size $h=2^{-6}$
(solid line).}\label{fig:EnergieFehler}
\end{figure}

\subsection{Kepler problem}\label{subsec:kepler}
The two-body problem, also known as Kepler problem, is to determine the motion
of two point particles with masses $m_{1}, m_2 \in\mathbb{R}$. By assuming $m_1=1$ with
gravitational constant $\gamma$ and $k=\gamma m_{1}m_{2}\in\mathbb{R}$
we construct the Lagrangian of the Kepler problem 
\begin{equation}\label{eq:lagK}
L(q,\dot{q})=\frac{1}{2}\dot{q}^{T}\dot{q}+\frac{k}{\sqrt{q_{1}^{2}+q_{2}^{2}}}
\end{equation}
and the Hamiltonian 
\[
H(q,p)=\frac{1}{2}p^{T}p-\frac{k}{\sqrt{q_{1}^{2}+q_{2}^{2}}}
\]
with $q=(q_{1},q_{2})^{T},\dot{q},p\in\mathbb{R}^{2}$. The Hamiltonian
equations provide the system of differential equations which we want
to solve with respect to the initial condition $(q_{0},p_{0})$. For the simulations we set $k=1.016895192894334 \cdot 10^3$, $q_{0}=(5,0)^{T}$ and $p_{0}=(0,17)^{T}$
since this results in a motion where the mass $m_{1}$ describes
an elliptic orbit around mass $m_{2}$ with period $T=5.0$. Therefore,
after $\frac{25}{h}$ steps of step size $h$ the simulated mass should
return the fifth time to the initial point $(q_{0},p_{0})$. 

\subsubsection{Numerical convergence order}

As error we compute the maximal difference between the given initial value and the value after $N=\frac{25}{h}$ steps of integration
given as
\[
\max_{i\in\{1,2\}}|q_{N,i}^{0}-q_{0,i}| \quad\text{and}\quad \max_{i\in\{1,2\}}|p_{N,i}^{0}-p_{0,i}|
\]
for configuration and momentum, respectively.
The error is shown in Fig.~\ref{fig:Gaukonvplot-1} and Fig.~\ref{fig:Lobkonvplot-1}
for the different variational integrators $(PsNrQu)$ and for step
sizes $h\in\{1,0.5,0.25,0.125,0.1,0.0625,0.03125\}$. The
numerically determined order is given in Fig.~\ref{fig:Ordnung-der-variatinellen-2}
and Fig.~\ref{fig:Ordnung-der-variatinellen-1-1} and coincides nicely with the
orders for the harmonic oscillator as given in
Tables~\ref{tab:Gau} and \ref{tab:Lob}.
It can be observed in
the plots that the error of the integrators decreases not far below $10^{-10}$.
Since the iteration is implicit and has to be solved by a Newton method, the accuracy is limited by the machine precision and the used solver (we used \texttt{fsolve} implemented in Matlab).
\begin{figure}[h]
\centering
\includegraphics[width=\textwidth]{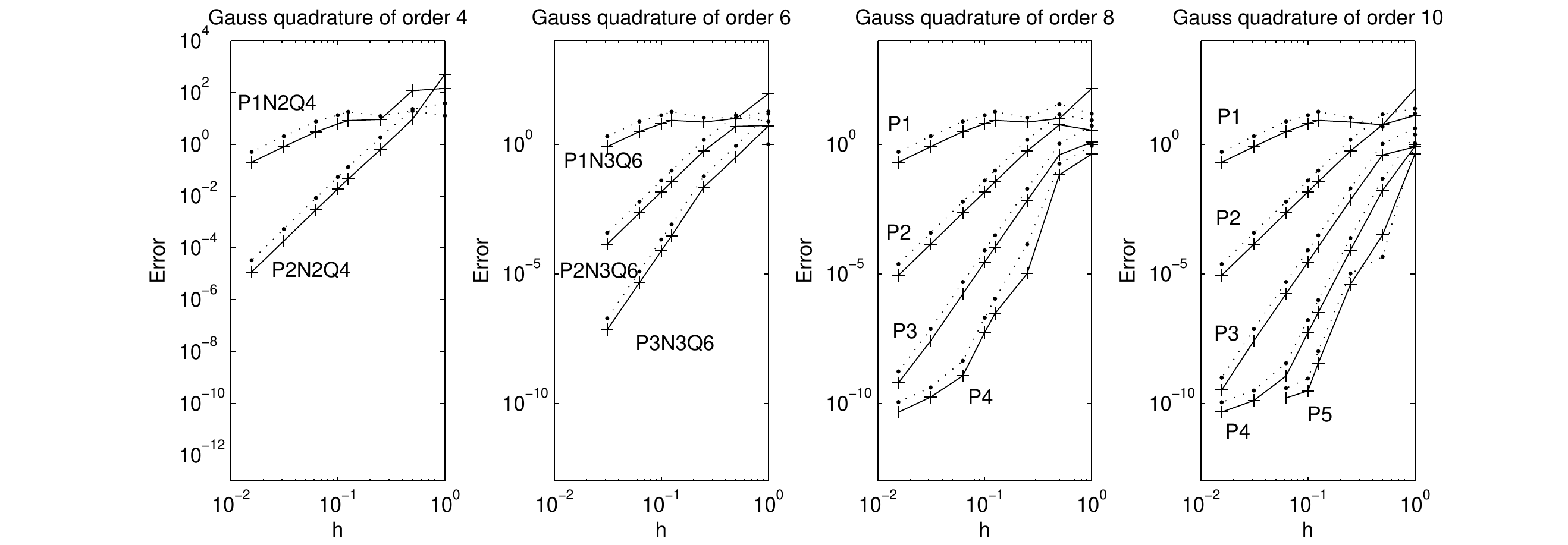}
\caption{\label{fig:Gaukonvplot-1}{Kepler problem: Log-log plot
of the error for the configurations $q$ (crosses) and momenta $p$
(dots) with step size $h$ for the integrators $(PsNrQ2rGau)$; divided
into four subplots, separated by applied Gauss quadrature. In the
third subplot $Ps$ denotes $(PsN4Q8Gau)$ and in the fourth $Ps$
denotes $(PsN5Q10Gau)$.}}
\end{figure}
\begin{figure}[h]
\centering
\includegraphics[width=\textwidth]{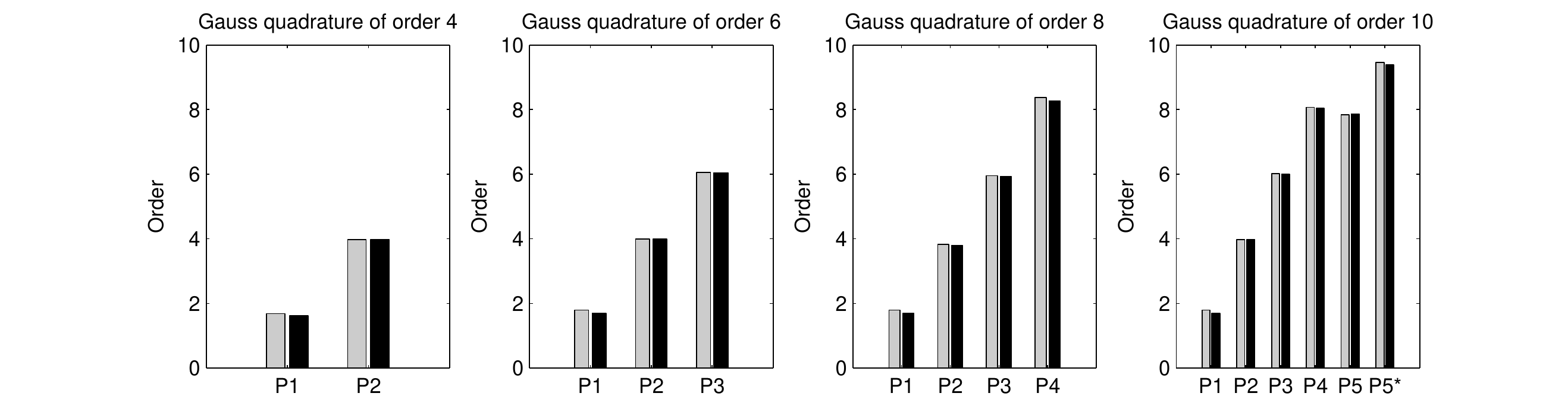}
\caption{\label{fig:Ordnung-der-variatinellen-2}{Kepler problem:
Order of the variational integrators $\left(PsNrQ2rGau\right)$ with
respect to the configurations $q$ (left bar) and the momenta $p$
(right bar). The title gives the order of the applied Gauss quadrature
which is $2r$. The $x$-axis indicates the degree of the used polynomial.
$P5$ gives the numerically determined order with use of the first six values
whereas $P5^{*}$ gives the numerically determined order with use of the first
five values.}}
\end{figure}
\begin{figure}[h]
\centering
\includegraphics[width=\textwidth]{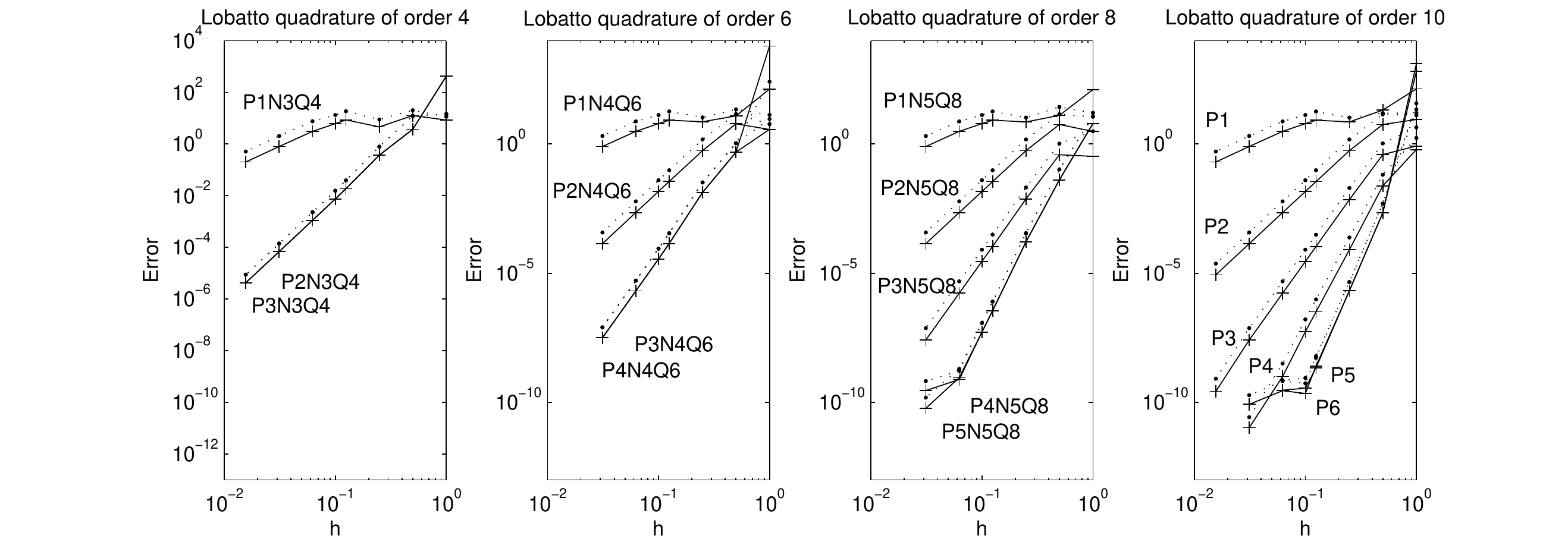}
\caption{\label{fig:Lobkonvplot-1}{Kepler problem: Log-log plot
of the error for the configurations $q$ (crosses) and momenta $p$
(dots) with step size $h$ for the integrators $(PsNrQ2r-2Lob)$;
divided into four subplots, separated by applied Lobatto quadrature.
The values of $(Ps-1NsQ2s-2Lob)$ and $(PsNsQ2s-2Lob)$ are superimposed,
they are of the same order. In the last subplot $Ps$ denotes $(PsN6Q10Lob)$.}}
\end{figure}
\begin{figure}[h]
\centering
\includegraphics[width=\textwidth]{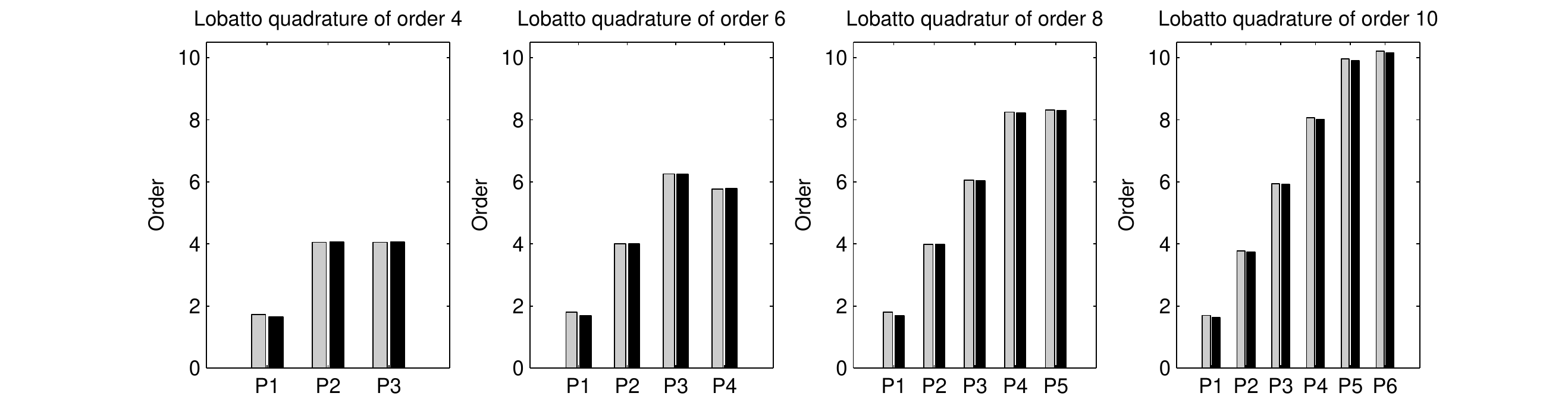}
\caption{\label{fig:Ordnung-der-variatinellen-1-1}{Kepler problem:
Order of the variational integrators$\left(PsNrQ2r-2Lob\right)$ with
respect to the configurations $q$ (left bar) and the momenta $p$
(right bar). The title gives the order of the applied Lobatto quadrature
which is $2r-2$. The $x$-axis indicates the degree of the used polynomial.}}
\end{figure}

\subsubsection{Conservation of angular momentum and long-time energy behavior}

Since the Lagrangian~\eqref{eq:lagK} of the Kepler problem
is invariant under the group of rotations $SO(2)=\{B\in\mathbb{R}^{2\times2}\,|\, B^{T}B=\text{Id},\,\,\det(B)=1\}$,
the angular momentum
\[
I(p,q)=-p_{1}q_{2}+p_{2}q_{1}
\]
is a conserved quantity in the system. The angular momentum is (up to numerical accuracy) also preserved in the discrete solution using the variational integrators
$(PsNrQu)$ (cf.~Remark~\ref{rem:lingroup}) as shown in Fig.~\ref{fig:Drehimpuls-Fehler-1} for the integrators $\left(P2N2Q4Gau\right)$,
$\left(P3N3Q6Gau\right)$ and $\left(P4N4Q8Gau\right)$. 
However, using a Runge-Kutta integrator of order four, angular momentum is not preserved anymore as shown in Fig.~\ref{fig:Drehimpuls-Fehler-1} (right).
\begin{figure}[h]
\centering
\includegraphics[clip,width=0.5\textwidth]{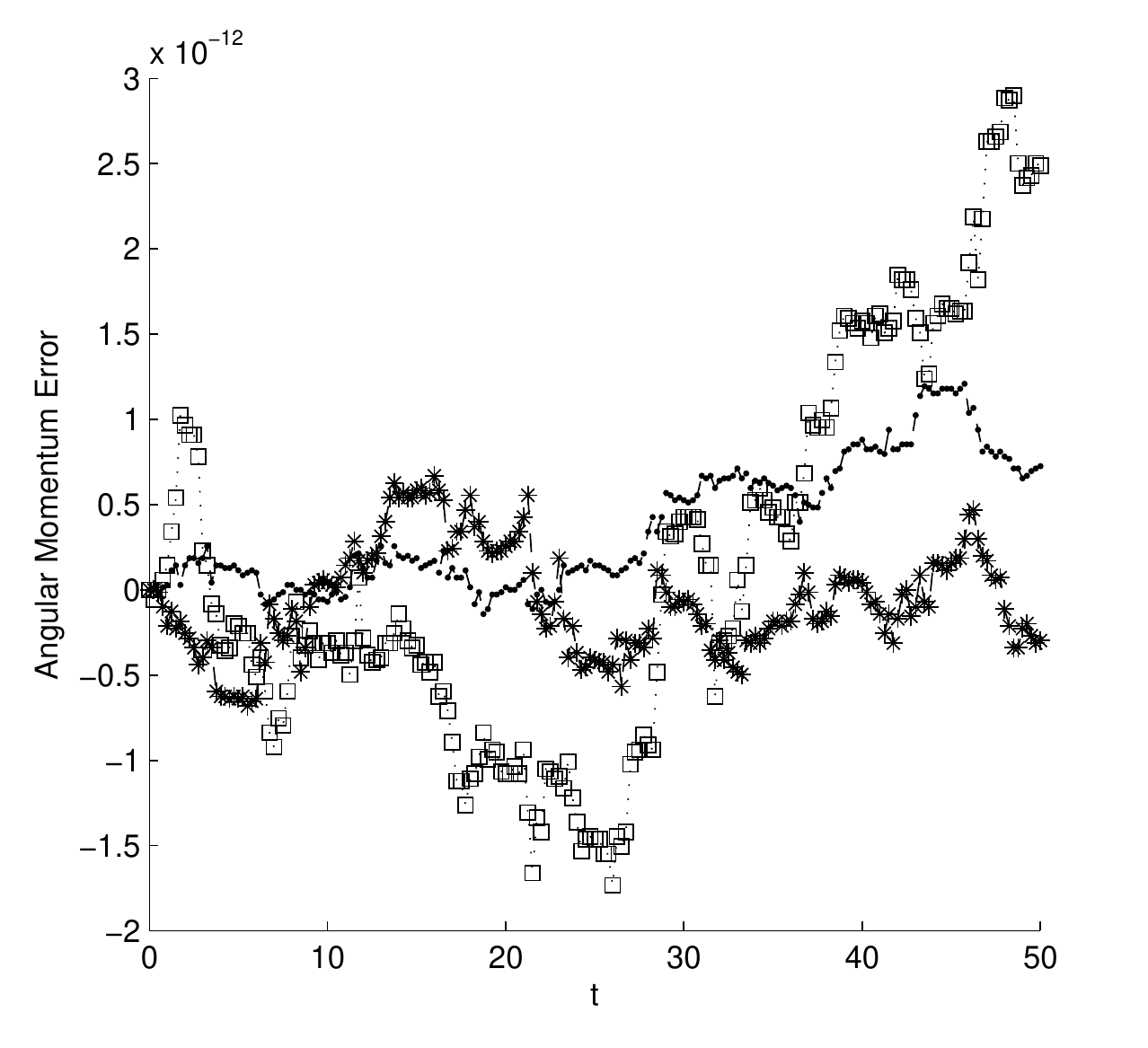}\includegraphics[clip,width=0.5\textwidth]{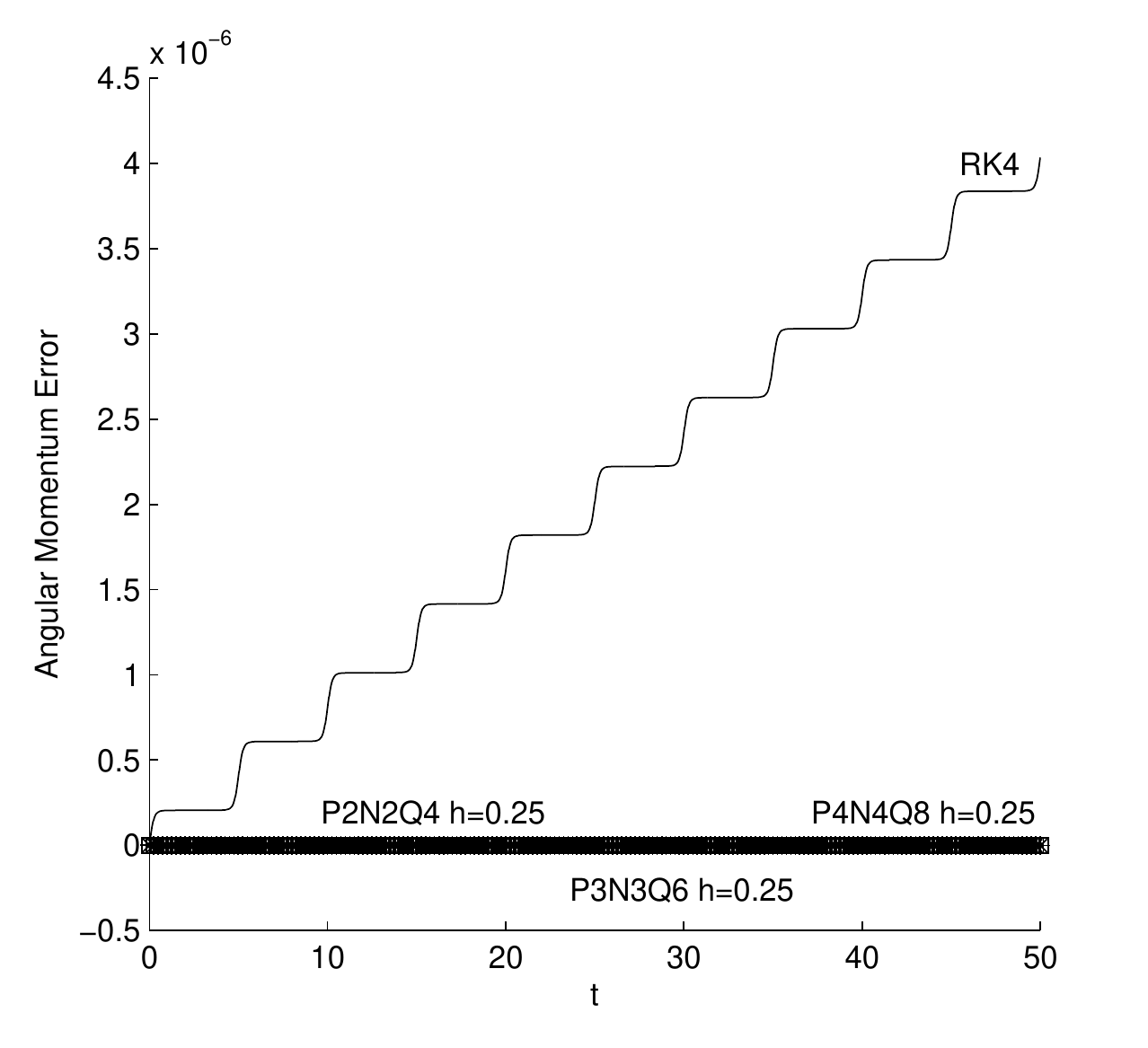}
\caption{Kepler problem: Left:
Error of the angular momentum for the variational
integrators $\left(P2N2Gau\right)$ (dots), $\left(P3N3Gau\right)$
(stars) and $\left(P4N4Gau\right)$ (squares) with
step size $h=0,25$. Right: Same plot as on the left with non-variational
integrator added (Runge-Kutta method of order four with step size
$h=2^{-6}$ (red solid)). }\label{fig:Drehimpuls-Fehler-1}
\end{figure}
The error in the energy is given in Fig.~\ref{fig:EnergieFehler-1}.
As for the harmonic oscillator, due to the symplecticity of the variational integrators the error of the integrators $\left(P3N3Gau\right)$ and $\left(P4N4Gau\right)$ is oscillating but bounded whereas the Runge-Kutta solution exhibits an energy drift.
\begin{figure}[h!]
\centering
\includegraphics[width=\textwidth]{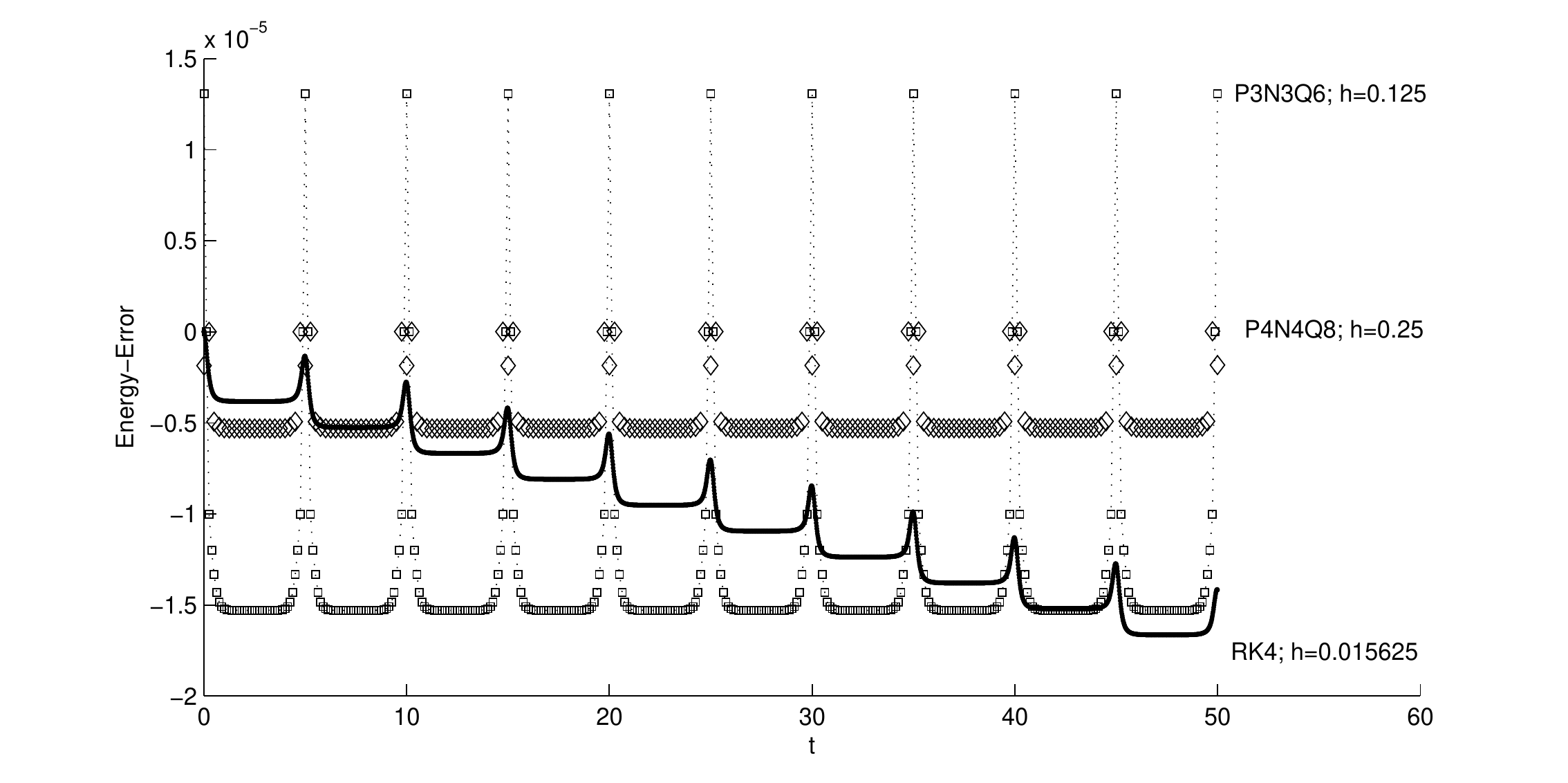}
\caption{Kepler problem: Error of
the energy for $\left(P3N3Q6Gau\right)$ with step size $h=0.125$
(squares), $\left(P4N4Q8Gau\right)$ with step size $h=0.25$ (diamonds)
and for a Runge-Kutta method of order four with step size $h=2^{-6}$
(solid line).}\label{fig:EnergieFehler-1}
\end{figure}
\begin{figure}[h]
\centering
\includegraphics[angle=90,width=13cm]{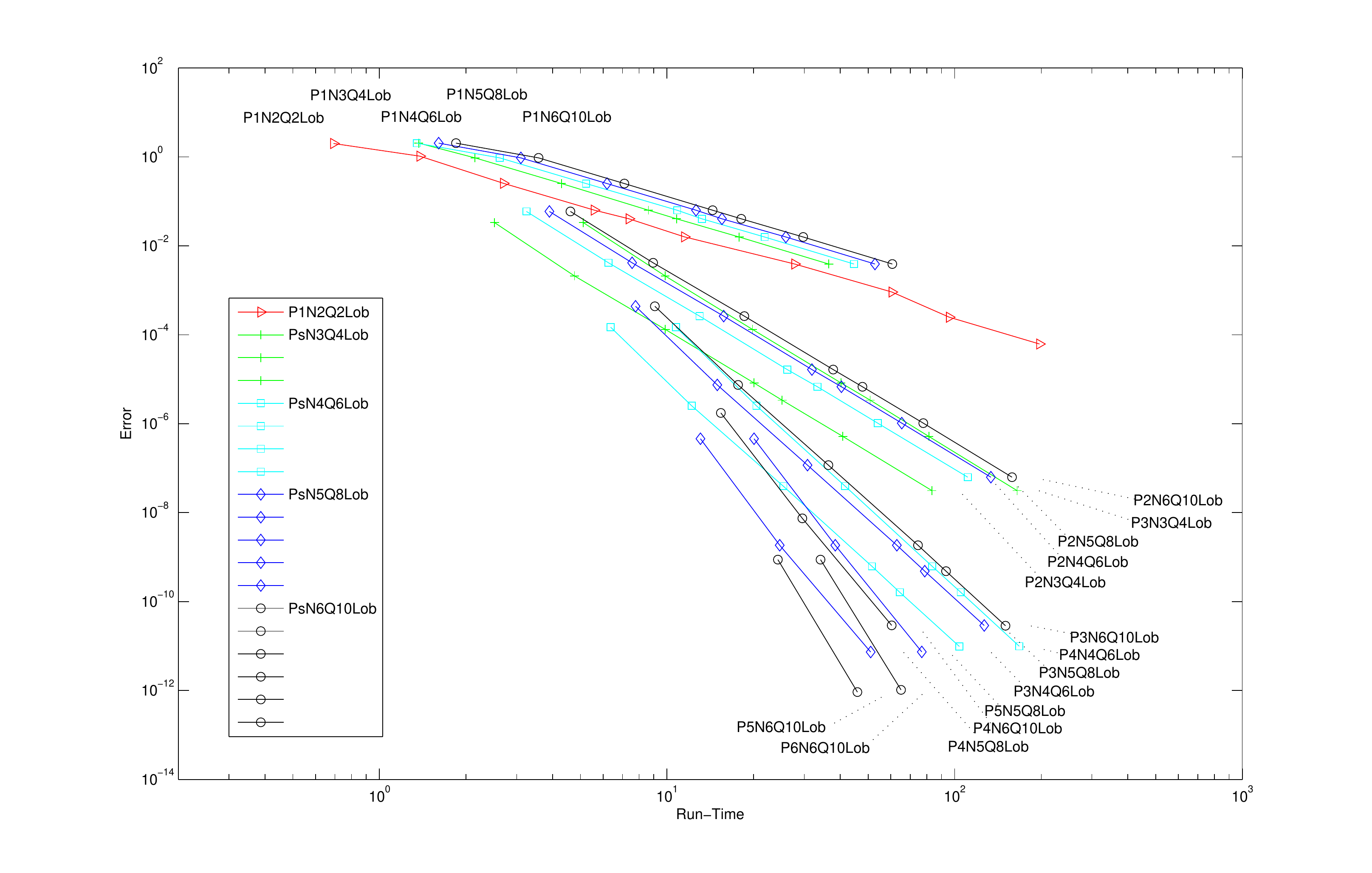}
\caption{Harmonic oscillator: Global error
against run-time for the integrators $(PsNrQ2r-2Lob)$ and different
step sizes $h$.}\label{fig:Laufzeit_Fehler}
\end{figure}
\begin{figure}[h]
\centering
\includegraphics[angle=90,width=13cm]{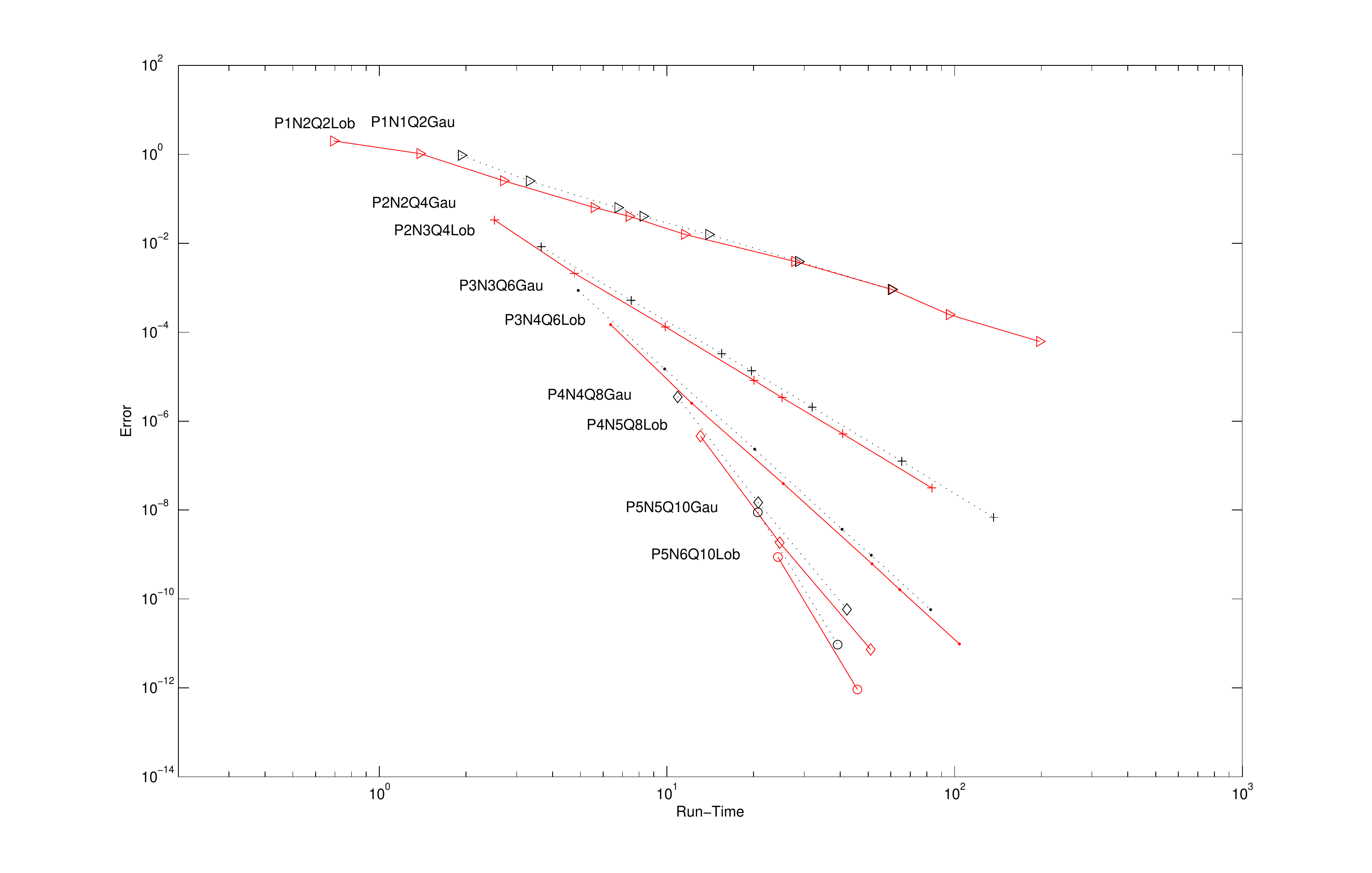}
\caption{Harmonic oscillator:
Global error against run-time for the integrators
$(PsNsQ2sGau)$ (dotted line) and $(PsNs+1Q2sLob)$ (solid line)
for $s=\{1,...,6\}$ and different step sizes $h$.}\label{fig:Laufzeit_Fehle die besten}
\end{figure}

\section{Conclusion}\label{sec:concl}

In this work variational integrators of higher order have been constructed by following the approach of Galerkin variational integrators introduced in \cite{MaWe01}.
Thereby, the solution of the Euler-Lagrange equations is approximated by a polynomial of degree $s$ and the action by a quadrature formula based on $r$ quadrature points. The restriction to $r=s$ quadrature points (as assumed in \cite{MaWe01}), which leads to symplectic partitioned Runge-Kutta methods, is dropped.
For the resulting methods the order of convergence has been determined numerically for two numerical examples. It has been numerically demonstrated that the order of convergence can be increased by adapting the number of quadrature points to the polynomial degree. In particular, if the Lobatto quadrature is used, the choice of $r=s+1$ leads to an integrator of order $2s$ which is of two orders higher compared to an integrator with $r=s$ quadrature points (order $2s-2$).
Thus, the ideal ratio between the number of quadrature points $r$ and the polynomial degree $s$ could be determined for different quadrature rules and the numerical results predict that $2s$ is the maximal possible order of the constructed variational integrators.
The structure-preserving properties such as symplecticity, momentum-preservation and good long-time energy behavior have been demonstrated by numerical examples.
In addition, for symmetrically distributed control points of the polynomial, the variational integrators $(PsNrQ2rGau)$ and $(PsNrQ2r-2Lob)$ have been shown to be time-reversible.
Furthermore, a linear stability analysis has been performed for selected integrators and stability regions have been determined.
It has been shown that the integrators $(PsNsQ2sGau)$ are A-stable, i.e.~there are no stability restrictions on the step size $h$. 
 
In the future, a formal proof of the numerically determined convergence order of $\min{(2s,u)}$ has to be performed. To this end, techniques of variational error analysis \cite{MaWe01,HaLe13} can be applied which are based on the following main idea: Rather than considering how closely the trajectory of the discrete Hamiltonian map matches the exact trajectory, one considers how closely the discrete Lagrangian matches the action integral. It is shown in \cite{MaWe01} and \cite{PatCu} that both order concepts are equivalent. The analytical computation of the variational error defined in this way for the general Galerkin variational integrators introduced in this work is still subject of ongoing research.
The application of the constructed higher order variational integrators to holonomic and nonholonomic integrators is straightforward, however, a careful analysis has to be carried out to see if the predicted orders also hold for these systems.
Currently, the approach is used for the optimal control of mechanical systems (\cite{COJ12}) 
and numerical results confirm that also the adjoint resulting from the necessary optimality condition inherits the same order of the variational scheme as it also does for symplectic partitioned Runge-Kutta methods (cf.~\cite{ObJuMa10}). 
Another topic of interest is the construction of time-adaptive variational integrators since naive time-adapting strategies destroy the structure-preserving properties (see e.g.~\cite{Leimkuhler2004}). The higher order integrators could be applied to adapt the order rather than to adapt the step size, e.g.~a higher order scheme can be deployed if higher accuracy requirements have to be matched.
Furthermore, for systems involving slow and fast time scales (see e.~g.~\cite{LO12}) variational integrators of different orders for the different subsystems can be used to increase the efficiency of the simulations. Thereby, an investigation regarding preserved quantities and long-time behavior is essential.

{\small
\bibliographystyle{alpha}
\bibliography{referencesAntrag}\clearpage
}

\end{document}